\documentclass[10pt]{amsart}

\usepackage{amsmath}
\usepackage{dsfont}
\usepackage{mathrsfs}
\usepackage{amssymb}
\usepackage{latexsym,amsfonts,amsthm,geometry}
\usepackage{url}
\usepackage{booktabs}

\usepackage{color}
\usepackage{tikz}

\allowdisplaybreaks[3]

\usepackage{multirow}

\usepackage{enumitem}

\usepackage{bm}
\newtheorem{theorem}{Theorem}[section]
\newtheorem{lemma}[theorem]{Lemma}

\newtheorem{corollary}[theorem]{Corollary}
\newtheorem{proposition}[theorem]{Proposition}
\theoremstyle{definition}
\newtheorem{definition}[theorem]{Definition}
\theoremstyle{remark}
\newtheorem{remark}[theorem]{Remark}

\newcommand{\bu}{{\bm u}}
\newcommand{\bv}{{\bm v}}
\newcommand{\bbR}{\mathbb{R}}
\newcommand{\bcurl}{{\bf curl}}
\newcommand{\p}{\partial}
\newcommand{\nab}{\nabla}
\newcommand{\calT}{\mathcal{T}}
\newcommand{\bbP}{\mathbb{P}}

\numberwithin{equation}{section}

\begin{document}

\title[Symmetric counterexample to Strang's conjecture]{
A symmetric counterexample to Strang's conjecture for bivariate $C^1$ cubic splines on triangulations}

\author[Pratyush Potu]{Pratyush Potu}
\address{Division of Applied Mathematics, Brown University, Providence, RI 02912, USA}
\email{pratyush\_potu@brown.edu}
\date{}

\begin{abstract}
We exhibit a triangulation of an equilateral triangle for which the space of bivariate $C^1$ cubic splines has dimension larger than the dimension formula conjectured by Strang. Notably, the triangulation is such that no two edges sharing a vertex are collinear, and the triangulation is invariant under the action of the isometry group ($D_3$) of the equilateral triangle which is triangulated. 
\end{abstract}

\maketitle

\thispagestyle{empty}

\section{Introduction}
Let $\mathcal T$ be a triangulation of a simply connected polygonal domain $\Omega\subset \mathbb R^2$. That is, $\mathcal T$ is a collection of closed triangles $\{T_1, \dots, T_F\}$ such that $\Omega = \bigcup_{i=1}^F T_i$, and if two triangles in $\mathcal T$ intersect, their intersection is either a common vertex or a common edge. The space of bivariate splines of degree $d$ and smoothness $r$ on $\mathcal T$ is
\[
    S_d^r(\mathcal T) := \{s\in C^r(\Omega):s|_T\in \mathbb P_d\:\:\forall T\in \mathcal T\},
\]
where $C^r(\Omega)$ denotes the set of functions defined on $\Omega$ with $r$ continuous derivatives in both variables and $\mathbb{P}_d$ is the set of all bivariate polynomials of total degree at most $d$.

The spaces $S_d^r(\mathcal T)$ have been extensively studied over the past decades due to their extensive applications across applied mathematics in areas such as numerical analysis (particularly finite element methods) and computer aided geometric design. A fundamental question asked by Strang in \cite{S:73, S:74} concerns the dimension of these spaces. For the case $r=1$, which is the focus of this work, Strang proposed the formula:
\begin{equation}\label{eq:strang}
\dim S^1_d(\mathcal T)\ \overset?=\ \tbinom{d+2}{2}F-(2d+1)E_0+3V_0,
\end{equation}
where $F$ denotes the number of triangles, $E_0$ the number of interior edges, and $V_0$ the number of interior vertices. It was soon realized, through an example of Powell discussed in \cite{MS:96}, that the formula must be corrected by including the number of ``singular vertices'' (interior vertices contained by exactly four edges which lie on two straight lines, see Definition~\ref{def:singular}). In particular, Morgan and Scott \cite{MS:75} established that for $d\geq 5$,
\begin{equation}\label{eq:LB}
    \dim S^1_d(\mathcal T) = \tbinom{d+2}{2}F-(2d+1)E_0+3V_0 + \sigma,
\end{equation}
where $\sigma$ is the number of singular vertices in the triangulation. Morgan and Scott \cite{MS:75b} further showed that for $d=2$, the formula \eqref{eq:LB} is not always correct by exhibiting an amazingly simple counterexample comprised of seven triangles for which the true dimension of $S^1_2(\mathcal T)$ is one larger than that predicted by \eqref{eq:LB}. Schumaker \cite{S:79} provided a formula for $\dim S_d^r(\calT)$ for general $r$ and $d$ which reduces to exactly \eqref{eq:LB} when $r=1$ and showed this formula was in fact a lower bound for all $d\geq 2$. Some years later, Alfeld, Piper, and Schumaker \cite{APS:87} showed using so-called Bernstein--B\'ezier techniques that the formula \eqref{eq:LB} is correct for $d=4$. The remaining question for multiple decades was the $d=3$ case. Very recently, this case was closed by Wu and Deng in a breakthrough work \cite{WD:26} which showcased a triangulation of 18 triangles for which the dimension of $S^1_3(\mathcal T)$ exceeds the formula \eqref{eq:LB} by one.

While much of the work mentioned above was carried out using numerical analysis techniques, a separate line of inquiry was initiated by Billera \cite{B:88} who showed that Strang's formula was correct for ``generic'' triangulations by using homological algebra. By applying and extending this foundational work of Billera, many questions have been answered about the spaces $S^r_d(\mathcal T)$ and their generalizations. We refer the interested reader to the beautiful survey of Lanini, Schenck, and Tymoczko \cite{LST:24} and references therein for more details in this direction.

Another reason for interest in the dimension of $S^r_d(\mathcal T)$ arises when one considers the so-called Stokes complex in the study of mixed finite element methods. The dimension of $S^1_{k+1}(\mathcal T)$ is directly related to the exactly divergence-free finite elements of Scott and Vogelius \cite{SV:85,GS:19}. The formula \eqref{eq:LB} holds for $d=k+1$ exactly when the divergence maps the continuous piecewise polynomial vector fields of degree $k$ onto the discontinuous piecewise polynomials of degree $k-1$, modulo the constraints at singular vertices (see Section~\ref{sec:divformulation}). Triangulations where \eqref{eq:LB} does not hold lead to spurious pressure modes in mixed finite element methods which use these piecewise polynomial spaces \cite{AQ:92}.

Turning now to the present work, we note that the mechanism by which the counterexample of Wu and Deng \cite{WD:26} produces a dimension jump relies on two ``quasi-singular vertices'' in the sense of Morgan and Scott \cite[Definition 4]{MS:96}. That is, interior vertices which are not singular but are still contained in two pairs of collinear edges (see Definition~\ref{def:quasisingular}). This is in stark contrast to the Morgan--Scott example, where no two edges at any vertex are collinear and hence the triangulation has no quasi-singular vertices. A natural question thus arises: does a counterexample to the dimension formula \eqref{eq:LB} for $d=3$ exist which also has no quasi-singular vertices? In this work, we answer this question in the affirmative by providing such a triangulation. 

The search for this triangulation relied heavily on the use of the Large Language Model (LLM) Claude Fable 5.1. Naturally, the search space for this problem is quite large. However, by leveraging results which allow us to exclude certain triangulations from being counterexamples to the dimension formula \eqref{eq:LB}, the search process could be made much more efficient. Relevant details on how the search was set up are provided in Section \ref{sec:search_procedure}.

\section{Preliminaries}

\subsection{Triangulations and notation}
Throughout, $\calT$ is a triangulation of a simply connected polygonal domain $\Omega\subset\bbR^2$ as in the introduction. We write $V$, $E$ and $F$ for the numbers of vertices, edges and triangles (faces) of $\calT$ respectively, and $V_0$, $E_0$ for the numbers of interior vertices and interior edges respectively. A vertex is interior if it does not lie on $\p\Omega$ while an edge is interior if it is shared by two triangles. 

For a vertex $v$ we denote the \emph{degree} of $v$ by $\deg v$ which is the number of edges of $\calT$ containing $v$. We denote by $\operatorname{star}(v)$ the union of the triangles containing $v$. The \emph{link} of $v$ is the set of edges of $\operatorname{star}(v)$ not containing $v$. Two edges of $\calT$ with the common endpoint $v$ are called \emph{collinear} if they lie on a common straight line. For any three points $a,b,c$ we write $[a,b,c]:=\det(b-a,\,c-a)$ so that $[a,b,c]=0$ if and only if $a,b,c$ are collinear.

\subsection{Singular and quasi-singular vertices}
The following two kinds of interior vertices are of interest in this work.

\begin{definition}[singular vertex]\label{def:singular}
An interior vertex $v$ of $\calT$ is \emph{singular} if $\deg v=4$ and the four edges at $v$ lie on two straight lines. We write $\sigma=\sigma(\calT)$ for the number of singular vertices of $\calT$.
\end{definition}

\begin{figure}[htb]\centering
\begin{tikzpicture}[scale=1.15]
  \coordinate (v) at (0,0);
  \coordinate (u1) at (15:1.5);
  \coordinate (u2) at (110:1.2);
  \coordinate (u3) at (195:1.3);
  \coordinate (u4) at (290:1.4);
  \draw[gray!55,dashed] (195:1.85) -- (15:2.05);
  \draw[gray!55,dashed] (290:1.9) -- (110:1.7);
  \draw[thin] (u1) -- (u2) -- (u3) -- (u4) -- cycle;
  \draw[very thick] (u3) -- (v) -- (u1);
  \draw[very thick] (u4) -- (v) -- (u2);
  \fill (v) circle (1.7pt);
  \node[below right=-1pt] at (v) {$v$};
  \foreach \u in {u1,u2,u3,u4} \fill (\u) circle (1.3pt);
\end{tikzpicture}
\caption{A singular vertex: $v$ is interior, $\deg v=4$, and the four edges at $v$ (thick) lie on two straight lines (dashed).}\label{fig:singular}
\end{figure}

Singular vertices are exactly the vertices involved in the dimension formula \eqref{eq:LB}. Morgan and Scott \cite{MS:75b,MS:96} also identified a second type of degenerate vertices, which they referred to as quasi-singular vertices. Such vertices are not involved in the formula \eqref{eq:LB}, but they obstruct the inductive construction of nodal bases \cite{MS:96}. 

\begin{definition}[quasi-singular vertex, {\cite[Definition 4]{MS:96}}]\label{def:quasisingular}
Let $v$ be an interior vertex of $\calT$ and let $T$ be a triangle of $\calT$ with vertex $v$ and edges $e_1,e_2$
at $v$. Let $T_1$ and $T_2$ be the other triangles of $\calT$ containing $e_1$ and $e_2$, respectively, and let
$e_3\subset T_1$ and $e_4\subset T_2$ be their other edges at $v$. The triangle $T$ is \emph{quasi-singular at
$v$} if the pairs $e_1,e_4$ and $e_2,e_3$ are both collinear and $v$ is not singular. The vertex $v$ is
\emph{quasi-singular} if some triangle of $\calT$ is quasi-singular at $v$.
\end{definition}

In particular, any quasi-singular vertex must be such that $\deg v\ge5$. The counterexample of Wu and Deng \cite{WD:26} involves two of these quasi-singular vertices. Similarly, Yuan and Stillman's \cite{SY:19} counterexample to Schenck and Stiller's more general ``$2r+1$'' conjecture concerning whether Schumaker\'s lower bound \cite{S:79} is obtained for smoothness $r$ when $d=2r+1$ also uses two quasi-singular vertices.

We say that $\calT$ \emph{has no collinear edge pairs} if no two edges of $\calT$ with a common endpoint are collinear. This condition, in particular, excludes singular vertices and quasi-singular vertices. The Morgan--Scott triangulation has this property, and so does the triangulation defined in Section~\ref{sec:triangulation} below.

\begin{figure}[htb]\centering
\begin{tikzpicture}[scale=1.4]
  \coordinate (v) at (0,0);
  \coordinate (u1) at (10:1.45);
  \coordinate (u2) at (75:1.2);
  \coordinate (u4) at (190:1.4);
  \coordinate (u5) at (222:1.25);
  \coordinate (u3) at (255:1.3);
  \draw[gray!55,dashed] (190:1.9) -- (10:1.95);
  \draw[gray!55,dashed] (255:1.8) -- (75:1.7);
  \draw[thin] (u1) -- (u2) -- (u4) -- (u5) -- (u3) -- cycle;
  \draw[thin] (v) -- (u5);
  \draw[very thick] (u4) -- (v) -- (u1);
  \draw[very thick] (u3) -- (v) -- (u2);
  \fill (v) circle (1.5pt);
  \node[font=\small] at (160:0.3) {$v$};
  \foreach \u in {u1,u2,u3,u4,u5} \fill (\u) circle (1.1pt);
  \node[font=\small] at (42:0.72) {$T$};
  \node[font=\small] at (312:0.85) {$T_1$};
  \node[font=\small] at (132:0.85) {$T_2$};
  \node[font=\small,above] at (10:1.0) {$e_1$};
  \node[font=\small,left=2pt] at (75:0.95) {$e_2$};
  \node[font=\small,right] at (255:0.8) {$e_3$};
  \node[font=\small,below] at (190:0.85) {$e_4$};
\end{tikzpicture}
\caption{A quasi-singular vertex in the sense of Definition~\ref{def:quasisingular} (cf.\ \cite[Figure 8]{MS:96}).}\label{fig:quasi}
\end{figure}

\subsection{Schumaker's lower bound}
Schumaker's lower bound on $\dim S^1_3(\calT)$ \cite{S:79} is denoted
\begin{equation}\label{eq:LB3}
\operatorname{LB}(\calT):=10F-7E_0+3V_0+\sigma.
\end{equation}
For the triangulation, $\widehat{\mathcal T}$, which we define in the next section, we show that $\dim S^1_3(\widehat{\mathcal T}) > \operatorname{LB}(\widehat{\mathcal T})$.

\section{The triangulation}\label{sec:triangulation}

Let $\Delta$ be the equilateral triangle with vertices
\[
q_1=(-1,0),\qquad q_2=(1,0),\qquad q_3=(0,\sqrt3),
\]
and center $c=(0,\sqrt3/3)$. We write $\rho$ for the rotation by $2\pi/3$ about $c$ that maps $q_1\mapsto q_2\mapsto q_3\mapsto q_1$ and $\mu$ for the reflection in the line through $q_1$ and $c$. Together, $\rho$ and $\mu$ generate the isometry group of $\Delta$ which is the dihedral group of order 6, denoted $D_3$.

\subsection{The main example}

We now state the main result and postpone the proof to Section \ref{sec:verification}.

\begin{theorem}\label{thm:triangulation}
Let $\widehat{\mathcal T}$ be the triangulation of $\Delta$ whose $18$ vertices $v_0,\dots,v_{17}$ are listed in
Table~\ref{tab:vertices} and whose $31$ triangles are
\begin{equation}\label{eq:tris}
\begin{array}{l}
(0,2,1),\ (0,3,2),\ (0,4,3),\ (0,1,4),\ (1,5,4),\ (1,6,5),\ (1,2,6),\ (2,7,6),\ (2,3,7),\\
(3,8,7),\ (3,9,8),\ (3,10,9),\ (4,11,10),\ (4,12,11),\ (4,5,12),\ (5,6,7),\ (5,13,12),\ (5,14,13),\\
(5,7,14),\ (7,15,14),\ (7,8,15),\ (8,16,15),\ (8,9,16),\ (9,10,16),\ (10,17,14),\ (10,14,16),\\
(10,11,17),\ (11,12,17),\ (12,13,17),\ (13,14,17),\ (14,15,16).
\end{array}
\end{equation}
Then:
\begin{enumerate}[label=(\roman*)]
\item $\widehat{\mathcal T}$ is a triangulation of $\Delta$ with $V=18$, $V_0=15$, $E=48$, $E_0=45$ and $F=31$, and it is invariant under the isometry group of $\Delta$.
\item No two edges of $\widehat{\mathcal T}$ with a common endpoint are collinear. In particular, $\widehat{\mathcal T}$ has neither singular nor quasi-singular vertices.
\item $\dim S^1_3(\widehat{\mathcal T})=41$, whereas $\operatorname{LB}(\widehat{\mathcal T})=10F-7E_0+3V_0=40$.
\end{enumerate}
\end{theorem}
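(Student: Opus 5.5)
The proof is a finite verification, and I would organize it around the three items of Theorem~\ref{thm:triangulation}, using exact arithmetic throughout.

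\textbf{Combinatorics and symmetry (item (i)).} Starting from the triangle list \eqref{eq:tris}, I will extract all edges. Each interior edge must lie in exactly two triangles and each boundary edge in exactly one. This gives $E=48$, $E_0=45$, and three boundary edges, so $V_0=15$. As a check, Euler's formula $V-E+F=18-48+31=1$ holds.

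To show that the cells really tile $\Delta$, I will check three things:
\begin{enumerate}[label=(\alph*)]
\item every oriented triangle $(a,b,c)$ in \eqref{eq:tris} has $[v_a,v_b,v_c]$ of one fixed sign;
\item the link of each interior vertex is a single cycle;
\item the areas sum to $|\Delta|=\sqrt3$.
\end{enumerate}
Consistent orientation together with a manifold link at every vertex gives a proper triangulation.

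For $D_3$-invariance, I will write down the permutations of $\{0,\dots,17\}$ induced by $\rho$ and $\mu$, for instance by computing coordinates from Table~\ref{tab:vertices}. I then check that these permutations map the vertex coordinates correctly and send the set of triangles to itself. Since $\rho$ and $\mu$ generate $D_3$, this suffices.

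\textbf{No collinear edge pairs (item (ii)).} For each vertex $v$ and each pair of neighbours $u,w$ of $v$, I check that $[v,u,w]\neq 0$ or that $u$ and $w$ lie on the same side of $v$. Equivalently, $u-v$ and $w-v$ are never antiparallel. Symmetry cuts the work to orbit representatives of vertices. Because the coordinates presumably lie in $\mathbb Q(\sqrt3)$ or a small extension, these are exact sign checks. The absence of singular and quasi-singular vertices then follows directly from Definitions~\ref{def:singular} and~\ref{def:quasisingular}, so $\sigma=0$ and $\operatorname{LB}=310-315+45=40$.

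\textbf{The dimension (item (iii)).} This is the main step. I will express $S^1_3(\widehat{\mathcal T})$ as the kernel of a linear map. Take the $10\cdot31=310$ polynomial coefficients. For each interior edge $e$ with line $\ell_e=0$, impose that the difference of the two adjacent polynomials is divisible by $\ell_e^2$. Equivalently, impose $C^0$ and $C^1$ conditions at a few points, giving $7$ independent conditions per edge, so $315$ rows in total.

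Computing the rank exactly is the core of the argument. We need rank $=269$, which gives dimension $310-269=41$. I would do this with a computer algebra system over the exact coordinate field, or over $\mathbb Q$ if the vertices are rational.

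To make the result checkable by hand, or at least more transparent, I would use the symmetry instead. Decompose the spline space into $D_3$-isotypic components and compute the rank block by block; the extra spline should live in one identifiable component.

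As an alternative route to the same count, there is the divergence formulation of Section~\ref{sec:divformulation}: the dimension exceeds $\operatorname{LB}$ by exactly the codimension of the range of $\operatorname{div}$ on continuous $\mathbb P_2$ vector fields within the discontinuous $\mathbb P_1$ fields. One could then exhibit explicitly a nonzero piecewise linear $p$ orthogonal to that range, that is, a spurious pressure mode. This certifies $\dim\ge 41$, and combined with a rank computation it gives equality.

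The main obstacle is making item (iii) rigorous rather than numerical. The near-degeneracy that causes the jump makes floating-point rank unreliable. So I would insist on exact arithmetic, or on an explicit certificate: the extra spline (or pressure mode), together with a $269\times269$ nonzero minor.
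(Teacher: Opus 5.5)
Your proposal is correct, but the route you make primary for (iii) is one the paper uses only as a cross-check. The paper's proof goes through the exact sequence of Lemma~\ref{lem:complex}, which gives $\dim S^1_3(\widehat{\mathcal T})=2(V+E)+1-\operatorname{rank}\mathrm{Div}$ for the $93\times 132$ matrix $\mathrm{Div}$. It then exhibits an explicit integer vector $w$ with $w^{\mathsf T}\mathrm{Div}=0$ (the spurious pressure mode), so $\operatorname{rank}\mathrm{Div}\le 92$, i.e.\ $\dim\ge 41$. Exact Gaussian elimination gives $\operatorname{rank}\mathrm{Div}\ge 92$.

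Your $315\times 310$ system, with seven smoothness conditions per interior edge, is the paper's Billera-type system with the cofactors $q_e$ eliminated. Its rank $404$ drops by $3E_0=135$ to your $269$. That formulation needs no exactness lemma, so no simple-connectivity or $H^2$ argument. Its cost is that the lower bound $\dim\ge 41$ is expensive to certify. You need $\operatorname{rank}\le 269$, which requires either $41$ independent splines (the paper's Bernstein--B\'ezier check produces such a basis) or $46$ independent row relations. A single ``extra spline'' certifies nothing, since there is no canonical $40$-dimensional subspace for it to be extra to.

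The divergence formulation is what makes the jump cheap to certify. The excess becomes a one-dimensional cokernel of a matrix with only $3F=93$ rows, so one vector suffices. Your mixed certificate is therefore valid and complete: the pressure mode gives $\dim\ge 41$, and a nonzero $269\times269$ minor of your smoothness matrix gives $\dim\le 41$. The variant ``extra spline plus minor'' is not.

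By Remark~\ref{rem:integer}, you can do (ii) and (iii) over $\mathbb{Z}$ via the affine map $\Psi$, since collinearity and spline dimension are affine invariants. Your checks for (i) and (ii) are more explicit than the paper's one-line remarks and are sufficient.
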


\begin{table}[htb]
\centering
\renewcommand{\arraystretch}{1.5}
\begin{tabular}{@{}cc|cc|cc@{}}\hline
Vertex & Coordinates & Vertex & Coordinates & Vertex & Coordinates \\\hline
$v_0$ & $(-\frac{3}{8},\frac{11}{24}\sqrt3)$ & $v_6$    & $(-\frac{1}{4},\frac{5}{12}\sqrt3)$  & $v_{12}$ & $(\frac{1}{4},\frac{17}{36}\sqrt3)$ \\\hline
$v_1$ & $(-\frac{1}{4},\frac{17}{36}\sqrt3)$ & $v_7$    & $(-\frac{1}{3},\frac{2}{9}\sqrt3)$   & $v_{13}$ & $(\frac{1}{4},\frac{5}{12}\sqrt3)$  \\\hline
$v_2$ & $(-\frac{1}{3},\frac{7}{18}\sqrt3)$  & $v_8$    & $(-\frac{1}{12},\frac{5}{36}\sqrt3)$ & $v_{14}$ & $(\frac{1}{3},\frac{2}{9}\sqrt3)$   \\\hline
$v_3$ & $(-1,0)$                             & $v_9$    & $(0,\frac{1}{12}\sqrt3)$             & $v_{15}$ & $(0,\frac{1}{6}\sqrt3)$             \\\hline
$v_4$ & $(0,\sqrt3)$                         & $v_{10}$ & $(1,0)$                              & $v_{16}$ & $(\frac{1}{12},\frac{5}{36}\sqrt3)$ \\\hline
$v_5$ & $(0,\frac{5}{9}\sqrt3)$              & $v_{11}$ & $(\frac{3}{8},\frac{11}{24}\sqrt3)$  & $v_{17}$ & $(\frac{1}{3},\frac{7}{18}\sqrt3)$  \\
\hline
\end{tabular}
\vspace{0.5em}
\caption{The vertices of $\widehat{\calT}$. The corners of $\Delta$ are $v_3=q_1$, $v_{10}=q_2$ and $v_4=q_3$.}
\label{tab:vertices}
\end{table}

\begin{figure}[htb]\centering
\begin{tikzpicture}[scale=4.500]
\draw[line width=0.3pt] (-0.3750,0.7939) -- (-0.2500,0.8179);
\draw[line width=0.3pt] (-0.3750,0.7939) -- (-0.3333,0.6736);
\draw[line width=0.3pt] (-0.3750,0.7939) -- (-1.0000,0.0000);
\draw[line width=0.3pt] (-0.3750,0.7939) -- (0.0000,1.7321);
\draw[line width=0.3pt] (-0.2500,0.8179) -- (-0.3333,0.6736);
\draw[line width=0.3pt] (-0.2500,0.8179) -- (0.0000,1.7321);
\draw[line width=0.3pt] (-0.2500,0.8179) -- (0.0000,0.9623);
\draw[line width=0.3pt] (-0.2500,0.8179) -- (-0.2500,0.7217);
\draw[line width=0.3pt] (-0.3333,0.6736) -- (-1.0000,0.0000);
\draw[line width=0.3pt] (-0.3333,0.6736) -- (-0.2500,0.7217);
\draw[line width=0.3pt] (-0.3333,0.6736) -- (-0.3333,0.3849);
\draw[line width=0.8pt] (-1.0000,0.0000) -- (0.0000,1.7321);
\draw[line width=0.3pt] (-1.0000,0.0000) -- (-0.3333,0.3849);
\draw[line width=0.3pt] (-1.0000,0.0000) -- (-0.0833,0.2406);
\draw[line width=0.3pt] (-1.0000,0.0000) -- (0.0000,0.1443);
\draw[line width=0.8pt] (-1.0000,0.0000) -- (1.0000,0.0000);
\draw[line width=0.3pt] (0.0000,1.7321) -- (0.0000,0.9623);
\draw[line width=0.8pt] (0.0000,1.7321) -- (1.0000,0.0000);
\draw[line width=0.3pt] (0.0000,1.7321) -- (0.3750,0.7939);
\draw[line width=0.3pt] (0.0000,1.7321) -- (0.2500,0.8179);
\draw[line width=0.3pt] (0.0000,0.9623) -- (-0.2500,0.7217);
\draw[line width=0.3pt] (0.0000,0.9623) -- (-0.3333,0.3849);
\draw[line width=0.3pt] (0.0000,0.9623) -- (0.2500,0.8179);
\draw[line width=0.3pt] (0.0000,0.9623) -- (0.2500,0.7217);
\draw[line width=0.3pt] (0.0000,0.9623) -- (0.3333,0.3849);
\draw[line width=0.3pt] (-0.2500,0.7217) -- (-0.3333,0.3849);
\draw[line width=0.3pt] (-0.3333,0.3849) -- (-0.0833,0.2406);
\draw[line width=0.3pt] (-0.3333,0.3849) -- (0.3333,0.3849);
\draw[line width=0.3pt] (-0.3333,0.3849) -- (0.0000,0.2887);
\draw[line width=0.3pt] (-0.0833,0.2406) -- (0.0000,0.1443);
\draw[line width=0.3pt] (-0.0833,0.2406) -- (0.0000,0.2887);
\draw[line width=0.3pt] (-0.0833,0.2406) -- (0.0833,0.2406);
\draw[line width=0.3pt] (0.0000,0.1443) -- (1.0000,0.0000);
\draw[line width=0.3pt] (0.0000,0.1443) -- (0.0833,0.2406);
\draw[line width=0.3pt] (1.0000,0.0000) -- (0.3750,0.7939);
\draw[line width=0.3pt] (1.0000,0.0000) -- (0.3333,0.3849);
\draw[line width=0.3pt] (1.0000,0.0000) -- (0.0833,0.2406);
\draw[line width=0.3pt] (1.0000,0.0000) -- (0.3333,0.6736);
\draw[line width=0.3pt] (0.3750,0.7939) -- (0.2500,0.8179);
\draw[line width=0.3pt] (0.3750,0.7939) -- (0.3333,0.6736);
\draw[line width=0.3pt] (0.2500,0.8179) -- (0.2500,0.7217);
\draw[line width=0.3pt] (0.2500,0.8179) -- (0.3333,0.6736);
\draw[line width=0.3pt] (0.2500,0.7217) -- (0.3333,0.3849);
\draw[line width=0.3pt] (0.2500,0.7217) -- (0.3333,0.6736);
\draw[line width=0.3pt] (0.3333,0.3849) -- (0.0000,0.2887);
\draw[line width=0.3pt] (0.3333,0.3849) -- (0.0833,0.2406);
\draw[line width=0.3pt] (0.3333,0.3849) -- (0.3333,0.6736);
\draw[line width=0.3pt] (0.0000,0.2887) -- (0.0833,0.2406);
\filldraw[fill=black] (-0.3750,0.7939) circle (0.0125cm);
\filldraw[fill=black] (-0.2500,0.8179) circle (0.0125cm);
\filldraw[fill=black] (-0.3333,0.6736) circle (0.0125cm);
\filldraw[fill=black] (-1.0000,0.0000) circle (0.0125cm);
\filldraw[fill=black] (0.0000,1.7321) circle (0.0125cm);
\filldraw[fill=black] (0.0000,0.9623) circle (0.0125cm);
\filldraw[fill=black] (-0.2500,0.7217) circle (0.0125cm);
\filldraw[fill=black] (-0.3333,0.3849) circle (0.0125cm);
\filldraw[fill=black] (-0.0833,0.2406) circle (0.0125cm);
\filldraw[fill=black] (0.0000,0.1443) circle (0.0125cm);
\filldraw[fill=black] (1.0000,0.0000) circle (0.0125cm);
\filldraw[fill=black] (0.3750,0.7939) circle (0.0125cm);
\filldraw[fill=black] (0.2500,0.8179) circle (0.0125cm);
\filldraw[fill=black] (0.2500,0.7217) circle (0.0125cm);
\filldraw[fill=black] (0.3333,0.3849) circle (0.0125cm);
\filldraw[fill=black] (0.0000,0.2887) circle (0.0125cm);
\filldraw[fill=black] (0.0833,0.2406) circle (0.0125cm);
\filldraw[fill=black] (0.3333,0.6736) circle (0.0125cm);
\end{tikzpicture}
\caption{The triangulation $\widehat{\mathcal T}$ of Theorem~\ref{thm:triangulation}.}\label{fig:triangulation}
\end{figure}

\begin{figure}[htb]\centering
\begin{tikzpicture}[scale=8.500]
\clip (-0.520,0.080) rectangle (0.520,1.000);
\draw[line width=0.3pt] (-0.3750,0.7939) -- (-0.2500,0.8179);
\draw[line width=0.3pt] (-0.3750,0.7939) -- (-0.3333,0.6736);
\draw[line width=0.3pt] (-0.3750,0.7939) -- (-1.0000,0.0000);
\draw[line width=0.3pt] (-0.3750,0.7939) -- (0.0000,1.7321);
\draw[line width=0.3pt] (-0.2500,0.8179) -- (-0.3333,0.6736);
\draw[line width=0.3pt] (-0.2500,0.8179) -- (0.0000,1.7321);
\draw[line width=0.3pt] (-0.2500,0.8179) -- (0.0000,0.9623);
\draw[line width=0.3pt] (-0.2500,0.8179) -- (-0.2500,0.7217);
\draw[line width=0.3pt] (-0.3333,0.6736) -- (-1.0000,0.0000);
\draw[line width=0.3pt] (-0.3333,0.6736) -- (-0.2500,0.7217);
\draw[line width=0.3pt] (-0.3333,0.6736) -- (-0.3333,0.3849);
\draw[line width=0.8pt] (-1.0000,0.0000) -- (0.0000,1.7321);
\draw[line width=0.3pt] (-1.0000,0.0000) -- (-0.3333,0.3849);
\draw[line width=0.3pt] (-1.0000,0.0000) -- (-0.0833,0.2406);
\draw[line width=0.3pt] (-1.0000,0.0000) -- (0.0000,0.1443);
\draw[line width=0.8pt] (-1.0000,0.0000) -- (1.0000,0.0000);
\draw[line width=0.3pt] (0.0000,1.7321) -- (0.0000,0.9623);
\draw[line width=0.8pt] (0.0000,1.7321) -- (1.0000,0.0000);
\draw[line width=0.3pt] (0.0000,1.7321) -- (0.3750,0.7939);
\draw[line width=0.3pt] (0.0000,1.7321) -- (0.2500,0.8179);
\draw[line width=0.3pt] (0.0000,0.9623) -- (-0.2500,0.7217);
\draw[line width=0.3pt] (0.0000,0.9623) -- (-0.3333,0.3849);
\draw[line width=0.3pt] (0.0000,0.9623) -- (0.2500,0.8179);
\draw[line width=0.3pt] (0.0000,0.9623) -- (0.2500,0.7217);
\draw[line width=0.3pt] (0.0000,0.9623) -- (0.3333,0.3849);
\draw[line width=0.3pt] (-0.2500,0.7217) -- (-0.3333,0.3849);
\draw[line width=0.3pt] (-0.3333,0.3849) -- (-0.0833,0.2406);
\draw[line width=0.3pt] (-0.3333,0.3849) -- (0.3333,0.3849);
\draw[line width=0.3pt] (-0.3333,0.3849) -- (0.0000,0.2887);
\draw[line width=0.3pt] (-0.0833,0.2406) -- (0.0000,0.1443);
\draw[line width=0.3pt] (-0.0833,0.2406) -- (0.0000,0.2887);
\draw[line width=0.3pt] (-0.0833,0.2406) -- (0.0833,0.2406);
\draw[line width=0.3pt] (0.0000,0.1443) -- (1.0000,0.0000);
\draw[line width=0.3pt] (0.0000,0.1443) -- (0.0833,0.2406);
\draw[line width=0.3pt] (1.0000,0.0000) -- (0.3750,0.7939);
\draw[line width=0.3pt] (1.0000,0.0000) -- (0.3333,0.3849);
\draw[line width=0.3pt] (1.0000,0.0000) -- (0.0833,0.2406);
\draw[line width=0.3pt] (1.0000,0.0000) -- (0.3333,0.6736);
\draw[line width=0.3pt] (0.3750,0.7939) -- (0.2500,0.8179);
\draw[line width=0.3pt] (0.3750,0.7939) -- (0.3333,0.6736);
\draw[line width=0.3pt] (0.2500,0.8179) -- (0.2500,0.7217);
\draw[line width=0.3pt] (0.2500,0.8179) -- (0.3333,0.6736);
\draw[line width=0.3pt] (0.2500,0.7217) -- (0.3333,0.3849);
\draw[line width=0.3pt] (0.2500,0.7217) -- (0.3333,0.6736);
\draw[line width=0.3pt] (0.3333,0.3849) -- (0.0000,0.2887);
\draw[line width=0.3pt] (0.3333,0.3849) -- (0.0833,0.2406);
\draw[line width=0.3pt] (0.3333,0.3849) -- (0.3333,0.6736);
\draw[line width=0.3pt] (0.0000,0.2887) -- (0.0833,0.2406);
\filldraw[fill=blue!70!black] (-0.3750,0.7939) circle (0.020cm);
\node[font=\scriptsize, inner sep=0.5pt] at (-0.4044,0.8109) {0};
\filldraw[fill=white] (-0.2500,0.8179) circle (0.020cm);
\node[font=\scriptsize, inner sep=0.5pt] at (-0.2732,0.8428) {1};
\filldraw[fill=white] (-0.3333,0.6736) circle (0.020cm);
\node[font=\scriptsize, inner sep=0.5pt] at (-0.3662,0.6824) {2};
\filldraw[fill=red!75!black] (0.0000,0.9623) circle (0.020cm);
\node[font=\scriptsize, inner sep=0.5pt] at (-0.0294,0.9793) {5};
\filldraw[fill=blue!70!black] (-0.2500,0.7217) circle (0.020cm);
\node[font=\scriptsize, inner sep=0.5pt] at (-0.2206,0.7047) {6};
\filldraw[fill=red!75!black] (-0.3333,0.3849) circle (0.020cm);
\node[font=\scriptsize, inner sep=0.5pt] at (-0.3628,0.4019) {7};
\filldraw[fill=white] (-0.0833,0.2406) circle (0.020cm);
\node[font=\scriptsize, inner sep=0.5pt] at (-0.0933,0.2080) {8};
\filldraw[fill=blue!70!black] (0.0000,0.1443) circle (0.020cm);
\node[font=\scriptsize, inner sep=0.5pt] at (-0.0000,0.1103) {9};
\filldraw[fill=blue!70!black] (0.3750,0.7939) circle (0.020cm);
\node[font=\scriptsize, inner sep=0.5pt] at (0.4044,0.8109) {11};
\filldraw[fill=white] (0.2500,0.8179) circle (0.020cm);
\node[font=\scriptsize, inner sep=0.5pt] at (0.2732,0.8428) {12};
\filldraw[fill=blue!70!black] (0.2500,0.7217) circle (0.020cm);
\node[font=\scriptsize, inner sep=0.5pt] at (0.2206,0.7047) {13};
\filldraw[fill=red!75!black] (0.3333,0.3849) circle (0.020cm);
\node[font=\scriptsize, inner sep=0.5pt] at (0.3628,0.4019) {14};
\filldraw[fill=blue!70!black] (0.0000,0.2887) circle (0.020cm);
\node[font=\scriptsize, inner sep=0.5pt] at (0.0000,0.3227) {15};
\filldraw[fill=white] (0.0833,0.2406) circle (0.020cm);
\node[font=\scriptsize, inner sep=0.5pt] at (0.0933,0.2080) {16};
\filldraw[fill=white] (0.3333,0.6736) circle (0.020cm);
\node[font=\scriptsize, inner sep=0.5pt] at (0.3662,0.6824) {17};
\end{tikzpicture}
\caption{Close-up of the interior of $\widehat{\calT}$, with the vertex numbering of Table~\ref{tab:vertices}. The certices of the central triangle $(5,14,7)$ (red), the other six vertices on the three symmetry axes (blue), and the six vertices of the regular orbit (white).}\label{fig:closeup}
\end{figure}

\begin{remark}[Symmetries]
    The interior vertices fall into four orbits of the isometry group of $\Delta$, $D_3$, noted above. The central triangle $\{v_5,v_{14},v_7\}$ consists of three vertices of degree seven which lie on the three symmetry axes of $\Delta$. Two further orbits are $\{v_0,v_{11},v_9\}$ and $\{v_6,v_{13},v_{15}\}$ which consist of vertices of degree four on the axes. Lastly, we have the regular orbit $\{v_1,v_{17},v_8,v_{16},v_{12},v_2\}$ of vertices of degree five. 
\end{remark}

\begin{remark}[integer coordinates]\label{rem:integer}
The dimension of $S^r_d$ is invariant under affine maps of the plane, and the affine map
\[
\Psi(p,\,q\sqrt3)=\big(36(p-q)+12,\ 72q-24\big)
\]
sends $q_1,q_2,q_3$ to $(-24,-24)$, $(48,-24)$, $(-24,48)$ and every vertex of $\widehat{\mathcal T}$ to a point with integer
coordinates, namely
\begin{center}
\renewcommand{\arraystretch}{1.5}
\begin{tabular}{@{}cc|cc|cc@{}}\hline
Vertex & Coordinates & Vertex & Coordinates & Vertex & Coordinates \\\hline
$v_0$ & $(-18,9)$   & $v_6$    & $(-12,6)$   & $v_{12}$ & $(4,10)$   \\\hline
$v_1$ & $(-14,10)$  & $v_7$    & $(-8,-8)$   & $v_{13}$ & $(6,6)$    \\\hline
$v_2$ & $(-14,4)$   & $v_8$    & $(4,-14)$   & $v_{14}$ & $(16,-8)$  \\\hline
$v_3$ & $(-24,-24)$ & $v_9$    & $(9,-18)$   & $v_{15}$ & $(6,-12)$  \\\hline
$v_4$ & $(-24,48)$  & $v_{10}$ & $(48,-24)$  & $v_{16}$ & $(10,-14)$ \\\hline
$v_5$ & $(-8,16)$   & $v_{11}$ & $(9,9)$     & $v_{17}$ & $(10,4)$   \\
\hline
\end{tabular}
\end{center}
In particular, $\Psi(\widehat{\mathcal T})$ is a triangulation of the right triangle with corners $(-24,-24)$, $(48,-24)$, $(-24,48)$ which is also a counterexample to the dimension formula \eqref{eq:LB} with the same dimension $41$. 
\end{remark}

\section{Verification of the main result}\label{sec:verification}

We now prove Theorem~\ref{thm:triangulation}. One can see statement (i) of Theorem~\ref{thm:triangulation} by computing the determinants $[a,b,c]$ for any set of three vertices $a,b,c$ which form a triangle in $\widehat{\mathcal{T}}$. One can see statement (ii) of Theorem~\ref{thm:triangulation} by observing the isometries of $\Delta$ ($\rho$ and $\mu$) map vertices of $\widehat{\mathcal{T}}$ to vertices of $\widehat{\mathcal{T}}$. It remains to compute $\dim S_3^1(\widehat{\mathcal T})$. This can be done in different ways. We provide explicit details for one computation, thus showing statement (iii) of Theorem~\ref{thm:triangulation}, and note two other ways of verifying this computation.

\subsection{The divergence formulation}\label{sec:divformulation}

The dimension of $S^1_3(\widehat{\mathcal{T}})$ can be computed from a discrete Stokes complex. We refer the reader to \cite{SV:85,FN:13} for this complex and its role in the study of finite element methods for Stokes' equations. In fact, this formulation was what was used to find and test the triangulation of Theorem~\ref{thm:triangulation} (see Section \ref{sec:search_procedure}). Let
\begin{align*}
\bm V_h&:=\{\bv\in [C^0(\Omega)]^2:\ \bv|_T\in[\bbP_2]^2\ \text{ for all }T\in\widehat{\mathcal{T}}\},\\
Q_h&:=\{q\in L^2(\Omega):\ q|_T\in\bbP_1\ \text{ for all }T\in\widehat{\mathcal{T}}\},
\end{align*}
be the space of continuous piecewise quadratic vector fields (the vector $P_2$ Lagrange finite element space \cite{EG:21}, which has dimension $2(V+E)$) and the space of discontinuous piecewise linear functions (which has dimension $3F$) respectively. For $s\in S^1_3(\widehat{\mathcal{T}})$, the rotated gradient $\bcurl\,s:=(\p_ys,-\p_xs)$ is continuous, piecewise quadratic and divergence free.

\begin{lemma}\label{lem:complex}
Let $\calT$ be a triangulation of a simply connected domain $\Omega$. The sequence
\begin{equation}\label{eq:complex}
0\longrightarrow\bbR\longrightarrow S^1_3(\calT)\xrightarrow{\ \bcurl\ }\bm V_h\xrightarrow{\ \operatorname{div}\ }Q_h
\end{equation}
is exact at $S^1_3(\calT)$ and at $\bm V_h$. In particular,
\begin{equation}\label{eq:dimdiv}
\dim S^1_3(\calT)=2(V+E)+1-\operatorname{rank}\big(\operatorname{div}:\bm V_h\to Q_h\big).
\end{equation}
\end{lemma}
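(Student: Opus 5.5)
The plan is to verify exactness at the two positions and then obtain \eqref{eq:dimdiv} by rank--nullity.

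\emph{Exactness at $S^1_3(\calT)$.} The map $\bbR\to S^1_3(\calT)$ is the inclusion of constants, which is clearly injective. Its image lies in $\ker\bcurl$. Conversely, if $\bcurl\,s=0$, then $\nab s=0$ on each triangle, so $s$ is constant on each $T$. Since $s$ is continuous and $\Omega$ is connected (the dual graph of a triangulation of a connected polygon is connected through shared edges), $s$ is globally constant. Hence $\ker\bcurl=\bbR$.

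\emph{$\bcurl$ maps into $\ker\operatorname{div}$.} For $s\in S^1_3(\calT)$, $\bcurl\,s$ is piecewise in $[\bbP_2]^2$. It is continuous because $s\in C^1(\Omega)$, so $\bcurl\,s\in\bm V_h$. Moreover $\operatorname{div}\bcurl\,s=\p_x\p_y s-\p_y\p_x s=0$ on each $T$. Since $\operatorname{div}$ of a continuous piecewise $\bbP_2$ field is taken piecewise and lands in $Q_h$, the composite is zero.

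\emph{Exactness at $\bm V_h$ (main step).} Let $\bv=(v_1,v_2)\in\bm V_h$ with $\operatorname{div}\bv=0$ piecewise.
\begin{enumerate}[label=(\alph*)]
\item Since $\bv$ is continuous and piecewise polynomial, it lies in $[H^1(\Omega)]^2$, and its weak divergence equals the piecewise divergence, which is zero.
\item Consider the $1$-form $\omega=v_2\,dx-v_1\,dy$. Its weak exterior derivative is $-(\p_x v_1+\p_y v_2)\,dx\wedge dy=0$, so $\omega$ is closed.
\item Because $\Omega$ is simply connected, $\omega$ is exact. Concretely, define $s(x)=\int_{x_0}^{x}\omega$ along piecewise linear paths. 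Path independence follows from Stokes' theorem on polygonal loops, applied triangle by triangle; the interface terms cancel by continuity of $\bv$, and then one uses simple connectivity.
\item The resulting $s$ satisfies $\p_y s=v_1$ and $\p_x s=-v_2$, that is, $\bcurl\,s=\bv$.
\item On each $T$, $s$ is a polynomial with gradient in $[\bbP_2]^2$, so $s|_T\in\bbP_3$. Its gradient $(-v_2,v_1)$ is continuous, so $s\in C^1(\Omega)$ and hence $s\in S^1_3(\calT)$.
\end{enumerate}
The point needing care is exactly this use of simple connectivity, i.e.\ path independence of the line integral for a merely piecewise smooth but continuous field.

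\emph{Dimension count.} By exactness at $S^1_3(\calT)$, $\dim\bcurl\big(S^1_3(\calT)\big)=\dim S^1_3(\calT)-1$. By exactness at $\bm V_h$, this image equals $\ker\operatorname{div}$. Rank--nullity on $\operatorname{div}:\bm V_h\to Q_h$, together with $\dim\bm V_h=2(V+E)$ (vector $P_2$ Lagrange degrees of freedom at vertices and edge midpoints), gives
\[
\dim S^1_3(\calT)-1=2(V+E)-\operatorname{rank}(\operatorname{div}).
\]
This is \eqref{eq:dimdiv}.
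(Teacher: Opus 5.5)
Your proof is correct and follows essentially the paper's route: constants form $\ker\bcurl$ by connectedness, simple connectivity yields a potential $s$ with $\nab s=(-v_2,v_1)$ (the paper cites the standard $H^2$-potential result for distributionally curl-free fields, while you construct $s$ by line integrals), $s$ is piecewise $\bbP_3$ with $s$ and $\nab s$ continuous, and rank--nullity with $\dim\bm V_h=2(V+E)$ finishes. One small sign slip: with $\omega=v_2\,dx-v_1\,dy$ you get $\bcurl\,s=-\bv$, so to make step (d) consistent take $\omega=-v_2\,dx+v_1\,dy$ (or replace $s$ by $-s$).
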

\begin{proof}
The kernel of $\bcurl$ consists of the constants since $\Omega$ is connected. If $\bv=(v_1,v_2)\in\bm V_h$ satisfies $\operatorname{div}\bv=0$, then $\bu:=(-v_2,v_1)$ is a continuous piecewise polynomial vector field with $\p_xu_2-\p_yu_1=\operatorname{div}\bv=0$. Hence, $\bu$ is curl free in the sense of distributions and, $\bu=\nab s$ for a function $s$ in the Sobolev space $H^2(\Omega)$ since $\Omega$ is simply connected. On each triangle $T\in\mathcal{T}$, $\nab s$ is a quadratic polynomial, so $s|_T\in\bbP_3$. Since $s$ and $\nab s=\bu$ are continuous, we have $s\in S^1_3(\calT)$ and $\bv=\bcurl\,s$. Thus $\ker(\operatorname{div})=\bcurl\,S^1_3(\calT)\cong S^1_3(\calT)/\bbR$, and \eqref{eq:dimdiv} follows from $\dim\bm V_h=2(V+E)$. 
\end{proof}

From the Euler relations (see e.g. \cite[Remark 8.13]{EG:21}), and the fact that $\Omega=\Delta$ is simply connected, we have that $V-E+F = 1$ and $E-E_0 = V-V_0$. Counting edge-triangle incidences gives $2E_0 + (E-E_0) = 3F$. From these three relations one obtains
$2V+2E+1-3F=10F-7E_0+3V_0$, so that \eqref{eq:dimdiv} and \eqref{eq:LB3} combine to
\begin{equation}\label{eq:equiv}
\dim S^1_3(\calT)=\operatorname{LB}(\calT)\quad\Longleftrightarrow\quad
\operatorname{rank}\big(\operatorname{div}:\bm V_h\to Q_h\big)=3F-\sigma ,
\end{equation}
and in general $\dim S^1_3(\calT)-\operatorname{LB}(\calT)=\dim\operatorname{coker}(\operatorname{div})-\sigma$. The number $\sigma$ in \eqref{eq:equiv} is explained by an observation of Scott and Vogelius \cite{SV:85}: if $v$ is a singular vertex with triangles $T_1,\dots,T_4$ in cyclic order, then every $q=\operatorname{div}\bv$ with $\bv\in\bm V_h$ satisfies $\sum_{i=1}^4(-1)^i\,q|_{T_i}(v)=0$, so that $\operatorname{div}\bm V_h$ has codimension at least $\sigma$ in $Q_h$. 

Thus, the dimension formula \eqref{eq:LB} holds for $d=3$ exactly when the singular-vertex conditions are the only constraints on the range of the divergence. In particular, on a triangulation without singular vertices, the dimension formula \eqref{eq:LB} holds for $d=3$ exactly when $\operatorname{div}:\bm V_h\to Q_h$ is surjective. In the language of mixed finite elements, this says that the finite element pair $\bm V_h\times Q_h$  has no spurious pressure modes other than those at singular vertices (cf. \cite{SV:85,GS:19,JLMNR:17}). If a triangulation is such that \eqref{eq:LB} fails, then there must exist a nonzero $p\in Q_h$, not corresponding to the singular vertex constraints, with $\int_\Omega p\operatorname{div}\bv=0$ for all $\bv\in\bm V_h$.

As the spaces $\bm V_h$ and $Q_h$ are finite dimensional, once we fix bases on them, we can directly compute $\operatorname{rank}\big(\operatorname{div}:\bm V_h\to Q_h\big)$ and hence $\dim S_3^1(\calT)$. We do this now, and then show exactly the spurious pressure mode for the $\bm V_h\times Q_h$ pair on $\widehat{\calT}$. 

For a positively oriented triangle $T=(a,b,c)$ with barycentric coordinates $\lambda_a,\lambda_b,\lambda_c$ put $G_a:=(y_b-y_c,\,x_c-x_b)$, so that $\nab\lambda_a=G_a/[a,b,c]$, and cyclically for $G_b,G_c$. The $P_2$ Lagrange basis functions are $\varphi_a=\lambda_a(2\lambda_a-1)$ at the vertices and $\varphi_{ab}=4\lambda_a\lambda_b$ at the edge midpoints, and in the basis $(\lambda_a,\lambda_b,\lambda_c)$ of $\bbP_1(T)$, their derivatives are
\begin{equation}\label{eq:divbasis}
[a,b,c]\,\p_i\varphi_a=G_{a,i}\,(3\lambda_a-\lambda_b-\lambda_c),\qquad
[a,b,c]\,\p_i\varphi_{ab}=4\,(G_{b,i}\lambda_a+G_{a,i}\lambda_b)\qquad(i=1,2).
\end{equation}

We denote by $\mathrm{Div}$ the $3F\times2(V+E)$ matrix which corresponds to the action of $\operatorname{div}:\bm V_h\to Q_h$ with respect to the vector Lagrange basis of $\bm V_h$ and the bases $(\lambda_a,\lambda_b,\lambda_c)$ of the spaces $\bbP_1(T)$. The three
rows of each triangle $T$ (indexed by the nodal values at the vertices of $T$, in the order of \eqref{eq:tris}) are multiplied by $[T]$ where $[T]=[a,b,c]$ for $T$ having vertices $a$, $b$, and $c$. The columns are ordered by the vertices $v_0,\dots,v_{17}$ and then by the edges in lexicographic order of their endpoint indices, with the two components of each node consecutive. For the triangulation $\widehat{\mathcal{T}}$, the entries of $\mathrm{Div}$ are rational multiples of either $1$ or $\sqrt3$.

\subsection{Proof of Theorem~\ref{thm:triangulation}(iii)}\label{sec:proofiii}

We check the left kernel of the matrix $\mathrm{Div} \in \mathbb{R}^{3F \times 2(V+E)} = \mathbb{R}^{93 \times 132}$.
\begin{proposition}\label{prop:rank}
$\operatorname{rank}\mathrm{Div}=92=3F-1$. Consequently $\dim S^1_3(\calT)=132+1-92=41$.
\end{proposition}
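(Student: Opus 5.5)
Since $\mathrm{Div}$ has $93$ rows, the rank is $92$ exactly when the left kernel of $\mathrm{Div}$ is one-dimensional. I will prove the two inequalities separately.

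For $\operatorname{rank}\mathrm{Div}\le 92$, I will exhibit an explicit nonzero vector $p\in\mathbb{R}^{93}$ with $p^{\top}\mathrm{Div}=0$. Equivalently, I will give a nonzero $p\in Q_h$ with $\int_\Omega p\operatorname{div}\bv=0$ for all $\bv\in\bm V_h$.

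\begin{itemize}
\item To find it, I will use the $D_3$ symmetry. Acting on $Q_h$ by pullback, the symmetry group permutes the triangles and commutes with $\operatorname{div}$ up to the induced action on $\bm V_h$. Hence the one-dimensional cokernel is a character of $D_3$, either trivial or the sign character.
\item I will search for $p$ inside the corresponding isotypic component. This reduces the unknowns from $93$ to roughly $93/6$ orbit representatives, plus the constraints from triangles fixed by reflections; for instance the central triangle $(5,14,7)$ is fixed by every element of $D_3$.
\item Having solved that small rational linear system (exact arithmetic over $\mathbb{Q}(\sqrt3)$, or over $\mathbb{Q}$ after the affine map $\Psi$ of Remark~\ref{rem:integer}), I will list the nodal values $p|_T(a)$ for all triangles.
\item Checking $p^{\top}\mathrm{Div}=0$ is then local. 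Each column of $\mathrm{Div}$ is supported on the triangles containing one vertex or one edge. So the verification reduces to $2V$ vertex identities, each a sum over $\operatorname{star}(v)$, and $2E$ edge identities, each involving at most two triangles. The identities along an edge say that the weighted values of $p$ match suitably across that edge.
\end{itemize}

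By symmetry, only one vertex and one edge per orbit need checking, which is a short table of computations.

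For $\operatorname{rank}\mathrm{Div}\ge 92$, I will exhibit a $92\times92$ minor of $\mathrm{Div}$ that is nonzero.

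\begin{itemize}
\item The cleanest route is an exact computation: Gaussian elimination over $\mathbb{Q}(\sqrt3)$, or over $\mathbb{Q}$ after applying $\Psi$, using the explicit entries from \eqref{eq:divbasis}.
\item Alternatively, I will show that the left kernel is at most one-dimensional by a propagation argument. Start from any $p$ in the left kernel. The edge-midpoint identities force strong relations between $p$ on neighbouring triangles; for a $P_2$ midpoint column, they essentially tie the values of $p$ at the endpoints of each interior edge on the two adjacent triangles. The vertex identities add further relations. Using these relations, I would propagate from the values of $p$ on a single triangle (for example the central one) to all of $Q_h$, showing that $p$ is determined by one free scalar.
\item A more robust fallback is to reduce modulo a suitable prime $\ell$, after working with integer coordinates via $\Psi$ and clearing denominators. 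A rank of $92$ over $\mathbb{F}_\ell$ implies a rank of at least $92$ over $\mathbb{Q}$, and the mod-$\ell$ computation is finite and checkable.
\end{itemize}

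Combining the two bounds gives $\operatorname{rank}\mathrm{Div}=92$. Then \eqref{eq:dimdiv} gives $\dim S^1_3=132+1-92=41$.

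I expect the main obstacle to be the lower bound. The propagation argument has to avoid accidentally producing a second kernel vector, and the cancellations that create the spurious mode make a hand argument delicate. I would therefore rely on the exact or modular rank computation as the certificate. The explicit $p$ serves to make the upper bound transparent and to identify the spurious pressure mode.
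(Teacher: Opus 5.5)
Your proposal is correct and is essentially the paper's proof. An explicit nonzero left-kernel vector certifies $\operatorname{rank}\mathrm{Div}\le 92$; the paper's integer vector $w$ of Table~\ref{tab:w} is indeed $D_3$-invariant, so your symmetric search would find it. Exact Gaussian elimination then gives the reverse inequality, after which \eqref{eq:dimdiv} yields $41$. One point of bookkeeping: with $\mathrm{Div}$ normalized as in the paper, the left-kernel vector $w$ is not the list of nodal values of the pressure $p$. The two are related through the local mass matrices, $p|_T(a)=6(3w_{T,a}-w_{T,b}-w_{T,c})$, so do not identify them when you do the verification.
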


\begin{proof}
Let $w\in\mathbb Z^{93}$ be the vector of Table~\ref{tab:w}. A direct computation shows that $w^{\mathsf T}\mathrm{Div}=0$ exactly. Since $w\ne0$, we have $\operatorname{rank}\mathrm{Div}\le92$.
The reverse inequality follows from Gaussian elimination. The dimension follows from \eqref{eq:dimdiv}.
\end{proof}

\begin{table}[htb]\centering\footnotesize
\setlength{\tabcolsep}{3.5pt}
\renewcommand{\arraystretch}{1.2}
\begin{tabular}{@{}lrrr@{\quad}|@{\quad}lrrr@{\quad}|@{\quad}lrrr@{}}\hline
$T$&$w_{T,a}$&$w_{T,b}$&$w_{T,c}$&$T$&$w_{T,a}$&$w_{T,b}$&$w_{T,c}$&$T$&$w_{T,a}$&$w_{T,b}$&$w_{T,c}$\\\hline
$(0,2,1)$ & $898$ & $-1694$ & $-1694$ & $(3,10,9)$ & $0$ & $0$ & $18$ & $(8,16,15)$ & $-1694$ & $-1694$ & $7378$ \\
$(0,3,2)$ & $-62$ & $-14$ & $322$ & $(4,11,10)$ & $0$ & $18$ & $0$ & $(8,9,16)$ & $-1694$ & $898$ & $-1694$ \\
$(0,4,3)$ & $18$ & $0$ & $0$ & $(4,12,11)$ & $-14$ & $322$ & $-62$ & $(9,10,16)$ & $-62$ & $-14$ & $322$ \\
$(0,1,4)$ & $-62$ & $322$ & $-14$ & $(4,5,12)$ & $7$ & $7$ & $-182$ & $(10,17,14)$ & $7$ & $-182$ & $7$ \\
$(1,5,4)$ & $-182$ & $7$ & $7$ & $(5,6,7)$ & $7$ & $574$ & $7$ & $(10,14,16)$ & $7$ & $7$ & $-182$ \\
$(1,6,5)$ & $1330$ & $-1694$ & $-182$ & $(5,13,12)$ & $-182$ & $-1694$ & $1330$ & $(10,11,17)$ & $-14$ & $-62$ & $322$ \\
$(1,2,6)$ & $-1694$ & $-1694$ & $7378$ & $(5,14,13)$ & $7$ & $7$ & $574$ & $(11,12,17)$ & $898$ & $-1694$ & $-1694$ \\
$(2,7,6)$ & $1330$ & $-182$ & $-1694$ & $(5,7,14)$ & $7$ & $7$ & $7$ & $(12,13,17)$ & $-1694$ & $7378$ & $-1694$ \\
$(2,3,7)$ & $-182$ & $7$ & $7$ & $(7,15,14)$ & $7$ & $574$ & $7$ & $(13,14,17)$ & $-1694$ & $-182$ & $1330$ \\
$(3,8,7)$ & $7$ & $-182$ & $7$ & $(7,8,15)$ & $-182$ & $1330$ & $-1694$ & $(14,15,16)$ & $-182$ & $-1694$ & $1330$ \\
$(3,9,8)$ & $-14$ & $-62$ & $322$ & & & & & & & & \\
\hline
\end{tabular}
\caption{The vector $w$ with $w^{\mathsf T}\mathrm{Div}=0$. For each triangle $T=(a,b,c)$ of \eqref{eq:tris} the three entries $w_{T,a},w_{T,b},w_{T,c}$ belong to its corresponding rows. }\label{tab:w}
\end{table}
Proposition~\ref{prop:rank} proves (iii) of Theorem~\ref{thm:triangulation}. 
In particular, $w$ encodes the linear functional
\[
  \ell(q) := \sum_{T}[T]\sum_{a\in T} w_{T,a}\, q|_T(a), \qquad q\in Q_h,
\]
which is such that $\ell(\operatorname{div} \bv) = 0$ for all $\bv \in \bm V_h$.
Since $Q_h$ is a finite-dimensional subspace of $L^2(\Omega)$, there is a unique $p\in Q_h$, the $L^2$-representative of $\ell$, such that $\int_\Omega p\, q = \ell(q)$ for all $q\in Q_h$.
Using $\int_T\lambda_a\lambda_b = [T](1+\delta_{ab})/24$, one finds that on each triangle $T=(a,b,c)$,
\[
  p|_T(a) = 6\bigl(3w_{T,a}-w_{T,b}-w_{T,c}\bigr),
\]
and analogously at $b$ and $c$. Taking $q=\operatorname{div}\bv$ gives $\int_\Omega p\operatorname{div}\bv = 0$ for every $\bv\in\bm V_h$. Moreover, $p$ is not constant. Thus $p$ is a spurious pressure mode of the pair $\bm V_h\times Q_h$ on a triangulation without singular vertices.
The function $p$ is shown in Figure~\ref{fig:p}.

\begin{figure}[htbp]
  \centering
  \includegraphics[width=0.95\textwidth]{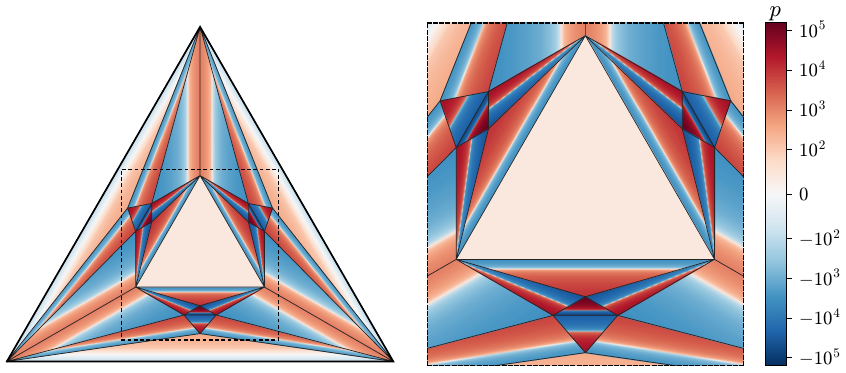}
  \caption{The spurious pressure mode $p$ on $\widehat{\mathcal T}$. It is linear on each triangle and in general discontinuous across edges. It satisfies $\int_\Omega p\operatorname{div}\bv = 0$ for every $\bv\in\bm V_h$.}
  \label{fig:p}
\end{figure}

\subsection{Two other ways to compute the dimension}\label{sec:confirm}
The following two classical descriptions of $S^1_3(\calT)$ lead to different matrices, whose ranks also confirm Proposition~\ref{prop:rank}.

One can compute the dimension using the Bernstein--B\'ezier form as detailed in \cite{LS:07}.  Writing $s|_T=\sum_{i+j+k=3}c^T_{ijk}B^T_{ijk}$ in the Bernstein basis of each triangle, a continuous piecewise cubic is determined by its coefficients at the $V+2E+F=145$ domain points of degree three. The coefficients on a common edge are shared, and $s$ is $C^1$ across the interior edge $e=ab$ shared by $T=(a,b,c)$ and $T^{\prime}=(a,b,d)$ if and only if \cite{F:86}, \cite[Theorem 2.28]{LS:07}
\begin{equation}\label{eq:BB}
c^{T^{\prime}}_{ij1}=\lambda_a(d)\,c^T_{i+1,j,0}+\lambda_b(d)\,c^T_{i,j+1,0}+\lambda_c(d)\,c^T_{ij1},\qquad i+j=2 ,
\end{equation}
where $(\lambda_a(d),\lambda_b(d),\lambda_c(d))=\big([d,b,c],[a,d,c],[a,b,d]\big)/[a,b,c]$ are the barycentric coordinates of $d$ with respect to $T$. 

Multiplying \eqref{eq:BB} by $[a,b,c]$ gives $3E_0=135$ linear conditions on the $145$ coefficients. The exact rank of this $135\times145$ system is $104$ for $\widehat{\mathcal{T}}$, and so $\dim S^1_3(\widehat{\mathcal{T}})=145-104=41$. One can compute the $41$ coefficient vectors that satisfy all $135$ conditions and verify they are linearly independent. This gives an explicit basis of $S^1_3(\widehat{\mathcal{T}})$.

Another way of computing the dimension comes from the homological algebra approach of Billera \cite{B:88}. Consider two polynomials $p_T,p_{T^{\prime}}$ on triangles sharing an edge $e$. Let the affine equation corresponding to $e$ be $\ell_e=0$. Then, the two polynomials are $C^1$ across $e$ if and only if $\ell_e^{\,2}$ divides $p_T-p_{T^{\prime}}$ \cite{B:88,BR:89}. Therefore $S^1_3(\widehat{\mathcal{T}})$ is isomorphic, via $s\mapsto(p_T)_T$, to the space of solutions $\big((p_T)_T,(q_e)_e\big)$ of the linear system
\begin{equation}\label{eq:BR}
p_T-p_{T^{\prime}}=\ell_e^{\,2}\,q_e\qquad(e=T\cap T^{\prime}\ \text{interior}),\qquad p_T\in\bbP_3,\ q_e\in\bbP_1 ,
\end{equation}
the cofactor $q_e$ being determined by $p_T-p_{T^{\prime}}$. For $\widehat{\mathcal{T}}$, the system has $10F+3E_0=445$ unknowns and $10E_0=450$ equations, and its exact rank is $404$, so we again see that $\dim S^1_3(\widehat{\mathcal{T}})=445-404=41$. This formulation is essentially the starting point of the approach of Billera \cite{B:88} and Schenck and Stillman \cite{SS:97a,SS:97b}. It would be interesting to see the full Billera chain complex for the triangulation $\widehat{\mathcal{T}}$ and also understand $S_3^1(\widehat{\mathcal{T}})$ in terms of representations as discussed in \cite[Section 3]{LST:24}.

\begin{remark}\label{rem:quadratic}
The triangulation $\widehat{\mathcal{T}}$ also violates the formula \eqref{eq:LB} for $d=2$: $\dim S^1_2(\widehat{\mathcal{T}})=10$ while $6F-5E_0+3V_0=6$.  
\end{remark}

\section{How this triangulation was found}\label{sec:search_procedure}
We now describe the search that produced $\calT$. We also include some discussion of how the search space could be reduced.

\subsection{Setup of the LLM-based search}
The computations were all carried out by the LLM Claude Fable 5.1, run as an autonomous agent in a sandboxed computing environment over a sequence of sessions. Each session started by reading the original prompt as well as files left over from earlier sessions. Each session ended by updating these files for future sessions. The original prompt consisted of fixing the question, the definitions, the computational method, and the rules of evidence.

In particular, the main question asked to find a triangulation of a topological disk with $\dim S_3^1(\calT) > \operatorname{LB}$ such that no vertex is quasi-singular. The computation was entirely driven by computing the rank of the matrix $\mathrm{Div}$ as outlined in Section \ref{sec:divformulation} for each tested triangulation.

Essentially four strategies were used: (A) perturb the boundary vertices of the Wu--Deng triangulation to see if we can obtain a collinearity free counterexample; (B) enumerate small combinatorial types of topological disk triangulations and test generic realizations of each; (C) use the reduction ideas of \cite{JK:05} to filter out certain sets of triangulations (see Section \ref{sec:reductions}); (D) try to transplant the structural features of the known counterexamples in other degrees \cite{MS:75b,D:90,SSY:20} to $(r,d)=(1,3)$. The strategies (A) and (D) were not particularly helpful.

The final strategy which allowed the triangulation of Theorem~\ref{thm:triangulation} to be found was a combination of the reduction results of Section \ref{sec:reductions} below which allowed us to reduce the space of tested triangulations along with concentrated searches inside certain symmetric families of triangulations (e.g. those which are invariant under the action of $D_3$). 

\subsection{Reducing triangulations}\label{sec:reductions}

The search space could be significantly reduced using a theorem of Jakli\v{c} and Kozak \cite{JK:05}. The key idea is that certain boundary vertices of a triangulation and their stars can be deleted without affecting whether the dimension formula holds. We state this result in their notation. Let $T_0$ be a boundary vertex of $\calT$ of degree $s+1$, so that $T_0$ lies in $s$ triangles, let $\calT_1\subseteq\calT$ be a subtriangulation that includes all the triangles with the vertex $T_0$, and let $\calT\setminus\calT_1$ be the rest of $\calT$. We let $|\calT_1|$ denote the number of triangles of $\calT_1$. A triangulation with exactly one interior vertex is called a \emph{cell}. 

\begin{theorem}[{\cite[Theorem~1]{JK:05}}]\label{thm:red}
Suppose that $\dim S^1_3(\calT\setminus\calT_1)$ is equal to the lower bound \eqref{eq:LB3}. Further suppose there are no consecutive pairs of collinear edges on the common boundary of $\calT_1$ and $\calT\setminus\calT_1$, and that one of the following conditions is fulfilled:
\begin{enumerate}
\item[(1)] $s\le4$ and $|\calT_1|=s$;
\item[(2)] $s=2,3$, $|\calT_1|=s+2$, and $\calT_1$ includes a cell.
\end{enumerate}
Then $\dim S^1_3(\calT)$ is equal to the lower bound \eqref{eq:LB3}.
\end{theorem}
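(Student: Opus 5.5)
We prove Theorem~\ref{thm:red} through the divergence formulation of Section~\ref{sec:divformulation}. By \eqref{eq:equiv}, we have $\dim S^1_3(\calT)=\operatorname{LB}(\calT)$ exactly when the cokernel of $\operatorname{div}:\bm V_h\to Q_h$ has dimension $\sigma$. Dually, this means every $p\in Q_h$ with $\int_\Omega p\operatorname{div}\bv=0$ for all $\bv\in\bm V_h$ must lie in the span of the singular-vertex functionals. We argue directly with such an annihilating $p$ on $\calT$.

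\textbf{Step 1: localize to $\calT_1$.} Let $\Omega_1=|\calT_1|$ and $\Omega_2=|\calT\setminus\calT_1|$. Any $\bv$ in the $P_2$ Lagrange space of $\calT\setminus\calT_1$ that vanishes at the nodes on $\partial\Omega_2\cap\partial\Omega_1$ extends by zero to an element of $\bm V_h$. Using such $\bv$, the restriction $p|_{\Omega_2}$ annihilates the divergence of these fields. The hypothesis gives $\dim S^1_3(\calT\setminus\calT_1)=\operatorname{LB}$ there, which is surjectivity of $\operatorname{div}$ modulo singular vertices on the subtriangulation. I would first strengthen this to a statement with boundary conditions on the common interface, via a careful count. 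Because $\calT_1$ is attached along a short chain of edges with no consecutive collinear pairs, I expect only a few extra degrees of freedom to appear. The aim is to show that $p|_{\Omega_2}$ is determined, up to singular-vertex modes of $\calT\setminus\calT_1$, by finitely many parameters on the interface.

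\textbf{Step 2: local analysis on $\calT_1$.} Next we test against $\bv$ supported in $\Omega_1$ and at the interface nodes. In case (1), $\calT_1$ is a fan of $s\le4$ triangles at the boundary vertex $T_0$. In case (2), it is a fan of $s\in\{2,3\}$ triangles plus a cell. In each case the relevant matrix is small, and I would write down the local divergence matrix explicitly using \eqref{eq:divbasis}. The claim to prove is that the rows belonging to $\calT_1$, together with the coupling columns on the interface, have full row rank exactly when no two consecutive interface edges are collinear. This is the combinatorial/dimension-count version of a Jakli\v{c}--Kozak-style argument. Equivalently, in the Bernstein--B\'ezier language \eqref{eq:BB}, we show that any spline on $\calT\setminus\calT_1$ extends to $\calT$ with exactly $\operatorname{LB}(\calT)-\operatorname{LB}(\calT\setminus\calT_1)$ new free parameters. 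For this, count the new coefficients: $10|\calT_1|$ minus $7$ per new interior edge, plus $3$ per new interior vertex. Then verify that the $C^1$ conditions \eqref{eq:BB} along the new edges are independent, solving them ring by ring around $T_0$. This extension formulation is the one I would actually use, since lower bounds are automatic and only the upper count needs independence.

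\textbf{Main obstacle.} The hard step is proving independence of the smoothness conditions across the interface in case (2). There the cell inside $\calT_1$ introduces an interior vertex whose conditions interact with those along the common boundary. I would handle it by first determining the coefficients in the disk of radius one around each interface vertex, using the noncollinearity hypothesis so that the two interface edges at each vertex span the plane. I would then treat the remaining cell as a Morgan--Scott-type vertex star, where a nonsingular interior vertex yields exactly the expected count. Case (1) should follow by the same ring-by-ring solving, with no cell present.
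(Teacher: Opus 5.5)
The paper does not prove this statement; it quotes it from \cite[Theorem~1]{JK:05}, so your proposal has to stand on its own. It does not: it says where a proof might be found but never carries out the computation that constitutes the proof. Only an upper bound is needed. Let $R:S^1_3(\calT)\to S^1_3(\calT\setminus\calT_1)$ be restriction and $K=\ker R$, the splines on $\calT_1$ vanishing to first order on the common boundary. Then $\dim S^1_3(\calT)\le\operatorname{LB}(\calT\setminus\calT_1)+\dim K$, so everything reduces to $\dim K\le\operatorname{LB}(\calT)-\operatorname{LB}(\calT\setminus\calT_1)$. Surjectivity of $R$ (your ``any spline extends'') is not needed. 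In case (1), with link $u_0,\dots,u_s$ of $T_0$, the right side is $10s-7(2s-1)+3(s-1)=4-s$. The hypothesis is already needed here to see that $\sigma$ does not change: at a degree-four $u_j$ the edges to $u_{j-1}$ and $u_{j+1}$ are opposite, so $u_j$ cannot be singular. For the left side, only two kinds of coefficients survive in each fan triangle: the one at $T_0$, say $c$, and those at the points $(2T_0+u_j)/3$, say $a_j$. The $(1,1)$ condition of \eqref{eq:BB} across $T_0u_j$ collapses to $\lambda_{T_0}(u_{j+1})\,a_j=0$ (barycentric coordinates of $(T_0,u_j,u_{j-1})$), and the $(0,2)$ conditions are vacuous. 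Now $\lambda_{T_0}(u_{j+1})\neq0$ is exactly the non-collinearity of $u_{j-1}u_j$ and $u_ju_{j+1}$, so $a_1=\dots=a_{s-1}=0$. The remaining $s-1$ conditions of type $(2,0)$ on $(c,a_0,a_s)$ then leave $\max(4-s,0)$ free parameters. This is where $s\le4$ enters: for $s\ge5$ one has $\dim K=0>4-s$, and a bare dimension hypothesis on $\calT\setminus\calT_1$ gives no control over the image of $R$. This computation is the proof of case (1), and your sketch does not contain it.

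What you propose to check instead is wrong as stated. The $C^1$ conditions along the new edges are not independent. In your own count ($10|\calT_1|$, minus $7$ per new interior edge, plus $3$ per new interior vertex), the $+3$ records three forced dependencies at each new interior vertex, and what must be shown is that there are no others. Moreover, with $10s$ unknowns and $7(2s-1)-3(s-1)=11s-4$ expected independent conditions, this is impossible once $s\ge5$, and nothing in your sketch detects that threshold.

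In case (2) the mechanism you describe is misplaced. The coefficients in the disk of radius one around an interface vertex are fixed by the $1$-jet of the spline on $\calT\setminus\calT_1$, whatever the geometry. Non-collinearity is needed, as above, to make the $(1,1)$ conditions across the edges of $\calT_1$ leaving interface vertices nondegenerate. Also, the interior vertex of the cell is not covered by the hypothesis and may be singular or carry collinear edges. In the singular case the right side grows by one, and you must show $\dim K$ grows by at most one; your appeal to a nonsingular vertex star does not address this.

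Finally, Step~1 is not an argument. Restricting to fields on $\calT\setminus\calT_1$ that vanish at the interface nodes enlarges the cokernel of $\operatorname{div}$ by an amount you never control. The asserted equivalence between your rank claim and the Bernstein--B\'ezier extension count is likewise not shown.
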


\begin{corollary}[{\cite[Corollary~2]{JK:05}}]\label{cor:red}
Let there be no collinearities of the edges at the interior vertices of $\calT$. Let the degrees of the interior vertices be at most $6$ and let the degrees of the vertices on the outer face be at most $5$. Then $\dim S^1_3(\calT)=\operatorname{LB}(\calT)$.
\end{corollary}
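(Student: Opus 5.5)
The plan is to prove the corollary by induction on the number of triangles $F$, using Theorem~\ref{thm:red} in case~(1) with $\calT_1=\operatorname{star}(T_0)$ for a well-chosen boundary vertex $T_0$. The base case is a single triangle, where $\dim S^1_3=\dim\bbP_3=10=\operatorname{LB}$. More generally, the induction will also pass through triangulations with no interior vertices; these are covered by the same scheme, since such a triangulation always has a boundary vertex of degree $2$, i.e.\ an ear. The hypotheses are stable under deleting stars, and this is what makes the induction run:
\begin{itemize}
\item interior vertices of $\calT\setminus\calT_1$ are interior vertices of $\calT$ with the same star, so they keep degree $\le 6$ and have no collinear edges;
\item a vertex of the link of $T_0$ that was interior in $\calT$ becomes a boundary vertex, but it loses at least the edge to $T_0$, so its degree drops from $\le6$ to $\le5$;
\item old boundary vertices only lose edges.
\end{itemize}
Hence $\calT\setminus\calT_1$ satisfies the hypotheses of the corollary. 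Since $\deg T_0\le5$ gives $s\le4$ and $|\calT_1|=s$, the inductive hypothesis plus Theorem~\ref{thm:red} gives $\dim S^1_3(\calT)=\operatorname{LB}(\calT)$.

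Two conditions of Theorem~\ref{thm:red} still have to be secured, and I expect verifying them to be the main obstacle. First, $\calT\setminus\calT_1$ must again be a triangulation of a simply connected polygon; it may not be disconnected or pinched at a vertex. Second, the common boundary of $\calT_1$ and $\calT\setminus\calT_1$, which is the link path of $T_0$, must have no consecutive collinear edge pairs. The consecutive pairs of that path meet at the inner vertices of the link path. If all these inner vertices are interior vertices of $\calT$, they carry no collinearities by assumption, and the second condition holds automatically. So the task reduces to choosing $T_0$ so that no inner vertex of its link lies on $\p\Omega$, and no chord from $T_0$ (an interior edge joining two boundary vertices) exists.

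To find such a $T_0$, I would use an ear-type argument. Consider the interior edges of $\calT$ with both endpoints on $\p\Omega$ (``chords''), together with boundary vertices of the link lying strictly inside the link path. Each chord cuts $\Omega$ into two simply connected pieces. Choose a chord whose smaller side contains no further chord; if there are no chords, take all of $\Omega$. Inside this minimal piece, pick a boundary vertex $T_0$ that is not an endpoint of the chord. By minimality, no edge from $T_0$, and no inner link vertex of $T_0$, reaches another boundary vertex. Otherwise there would be a chord, or a boundary vertex adjacent to two link vertices, producing a smaller piece.

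A small case check handles the degenerate situation where the minimal piece contains a single triangle. There $T_0$ has degree $2$ and is simply an ear, whose removal clearly preserves a disk with no common-boundary collinearity issue, since the common boundary is a single edge. Removing an ear also reduces to the inductive hypothesis. With the choice of $T_0$ settled, the induction closes and the corollary follows.
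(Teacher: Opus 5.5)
Your proposal is correct and follows essentially the route the paper relies on. The paper only quotes the corollary from \cite{JK:05}, but its notion of a reducible triangulation is exactly your induction: delete the star of a chord-free boundary vertex lying in at most four triangles, and apply case~(1) of Theorem~\ref{thm:red} in reverse, with the degree bounds preserved because each link vertex that becomes a boundary vertex loses its edge to $T_0$. Your minimal-chord selection only needs to guarantee that no chord emanates from $T_0$ (equivalently, every inner link vertex is interior, which gives both the disk property and non-collinearity on the common boundary), so the clause about a boundary vertex adjacent to two link vertices is unnecessary and can be dropped.
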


From Corollary~\ref{cor:red}, we see that a counterexample without collinear edge pairs must have an interior vertex of degree at least seven or a boundary vertex of degree at least six. The triangulation, $\widehat{\mathcal{T}}$, of Theorem~\ref{thm:triangulation} has both of these properties.

Before explaining how Theorem~\ref{thm:red} was used, we also note the following result which was useful in computation. 
\begin{definition}[Chord]
    A \emph{chord} is an interior edge with two boundary vertices.
\end{definition}

\begin{lemma}\label{lem:chord}
For a triangulation $\calT$, let
\begin{equation}\label{eq:defect}
\delta(\calT):=\dim S^1_3(\calT)-\operatorname{LB}(\calT)
=\dim\operatorname{coker}\big(\operatorname{div}:\bm V_h\to Q_h\big)-\sigma\ \ge0
\end{equation}
be the \emph{defect} of $\calT$, the amount by which the true dimension exceeds the lower bound \eqref{eq:LB3}. Let $e$ be a chord in $\calT$. Then $e$ splits $\calT$ into two triangulations $\calT_1,\calT_2$ of simply connected polygonal domains with $\delta(\calT)=\delta(\calT_1)+\delta(\calT_2)$.
\end{lemma}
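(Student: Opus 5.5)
The plan is to compare both sides of the defect identity separately: first the true dimensions $\dim S^1_3$, then the lower bounds $\operatorname{LB}$. I will show that both drop by exactly $7$ under the splitting, so the defect is additive.

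\emph{Geometry of the splitting.} Since $e$ is an interior edge whose endpoints both lie on $\p\Omega$, and $\Omega$ is a simply connected polygon, the segment $e$ separates $\Omega$ into two simply connected polygons $\Omega_1,\Omega_2$ with $\Omega_1\cap\Omega_2=e$. The triangles of $\calT$ lying in $\Omega_i$ form a triangulation $\calT_i$ of $\Omega_i$, and $\calT_1\cup\calT_2=\calT$.

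\emph{Combinatorial bookkeeping.} I will check the following counts.
\begin{itemize}
\item $F=F_1+F_2$.
\item $E_0=E_0(\calT_1)+E_0(\calT_2)+1$. Every interior edge of $\calT$ other than $e$ is interior in exactly one $\calT_i$, while $e$ is a boundary edge of both pieces.
\item $V_0=V_0(\calT_1)+V_0(\calT_2)$. The boundary of $\Omega_i$ is part of $\p\Omega$ together with $e$. No vertex lies in the relative interior of $e$, and both endpoints of $e$ are boundary vertices of $\calT$. Hence a vertex is interior in $\calT$ if and only if it is interior in exactly one of the $\calT_i$.
\item $\sigma=\sigma(\calT_1)+\sigma(\calT_2)$. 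For an interior vertex, its star lies entirely in one piece, so its degree and edge directions are unchanged.
\end{itemize}
Plugging these into \eqref{eq:LB3} gives
\[
\operatorname{LB}(\calT)=\operatorname{LB}(\calT_1)+\operatorname{LB}(\calT_2)-7 .
\]

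\emph{The dimension count.} Consider the restriction map $S^1_3(\calT)\to S^1_3(\calT_1)\oplus S^1_3(\calT_2)$, which is injective. Its image consists of the pairs $(s_1,s_2)$ that glue to a $C^1$ function on $\Omega$. Since $\Omega_1\cap\Omega_2=e$, this happens exactly when $s_1$ and $s_2$ have the same trace on $e$ and the same normal derivative on $e$. Define
\[
J:S^1_3(\calT_1)\oplus S^1_3(\calT_2)\to\bbP_3(e)\times\bbP_2(e),\qquad J(s_1,s_2)=\big((s_1-s_2)|_e,\ \p_n(s_1-s_2)|_e\big).
\]
Here each $s_i|_e$ is a single cubic, because $e$ is one edge of one triangle of $\calT_i$. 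Then $S^1_3(\calT)\cong\ker J$.

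I claim $J$ is surjective, which gives $\dim S^1_3(\calT)=\dim S^1_3(\calT_1)+\dim S^1_3(\calT_2)-7$. Given $f\in\bbP_3(e)$ and $g\in\bbP_2(e)$, let $\ell_e$ be the affine function vanishing on $e$, normalized so that $\p_n\ell_e=1$. Let $\tilde f,\tilde g$ be the extensions of $f,g$ that are constant in the normal direction. Then $p:=\tilde f+\ell_e\tilde g\in\bbP_3$ satisfies $p|_e=f$ and $\p_np|_e=g$. Since global polynomials lie in $S^1_3(\calT_1)$, we have $J(p,0)=(f,g)$.

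Subtracting the two identities yields $\delta(\calT)=\delta(\calT_1)+\delta(\calT_2)$.

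\emph{Main obstacle.} The delicate point is the bookkeeping of interior and singular vertices, specifically that no vertex changes its interior/boundary status under the split. This relies on $e$ being a chord, so both its endpoints are already boundary vertices of $\calT$. The surjectivity of $J$ is routine once one uses polynomials of the form $\tilde f+\ell_e\tilde g$.
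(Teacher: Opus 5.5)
Your proof is correct and follows essentially the same route as the paper: the same face/edge/vertex/singular-vertex bookkeeping giving $\operatorname{LB}(\calT)=\operatorname{LB}(\calT_1)+\operatorname{LB}(\calT_2)-7$, and the same rank--nullity count showing that $\dim S^1_3$ also drops by exactly $7$. The paper phrases the dimension step as a surjective restriction $S^1_3(\calT)\to S^1_3(\calT_1)$ whose kernel has codimension $7$ in $S^1_3(\calT_2)$, whereas you package it as the kernel of the surjective jump map $J$; your explicit extension $\tilde f+\ell_e\tilde g$ supplies the polynomial-extension detail that the paper only asserts.
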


\begin{proof}
The restriction map $S^1_3(\calT)\to S^1_3(\calT_1)$ is surjective. That is, the value and the normal derivative of a spline on $\calT_1$ along $e$ are a cubic and a quadratic function on $e$, which are also the value and normal derivative along $e$ of some cubic polynomial. This polynomial, taken on all of $\calT_2$, completes the spline over the whole triangulation $\calT$. On the other hand, the kernel of the restriction map consists of the splines on $\calT_2$ vanishing together with their gradient along $e$, which form a subspace of codimension $7$ of $S^1_3(\calT_2)$. Hence, $\dim S^1_3(\calT)=\dim S^1_3(\calT_1)+\dim S^1_3(\calT_2)-7$. 

Note that $e$ must be the only interior edge of $\calT$ that is a boundary edge of $\calT_1$ or $\calT_2$. Moreover, the end points of $e$ are boundary vertices of all three triangulations, and every other vertex has the same star in $\calT$ as in $\calT_1$ or $\calT_2$. Hence, $F=F_1+F_2$, $E_0=E_{0}(\calT_1)+E_{0}(\calT_2)+1$, $V_0=V_{0}(\calT_1)+V_{0}(\calT_2)$ and $\sigma=\sigma_1+\sigma_2$, so that $\operatorname{LB}(\calT)=\operatorname{LB}(\calT_1)+\operatorname{LB}(\calT_2)-7$ by
\eqref{eq:LB3}.
\end{proof}

We now explain how Theorem~\ref{thm:red} and Lemma~\ref{lem:chord} were used in the search process. 
\begin{definition}[Reducible triangulation]
    We call a triangulation $\calT$ \emph{reducible} if a sequence of deletions of stars of boundary vertices which belong to at most four triangles and without chords, ends with a single triangle.
\end{definition}
Note that the dimension formula \eqref{eq:LB} holds on a single triangle. Then, for any  reducible triangulation without collinear edge pairs, we can apply case (1) of Theorem~\ref{thm:red} along the reduction sequence in reverse order to conclude that the dimension formula holds.  In other words, suppose $\calT$ is a counterexample to the formula \eqref{eq:LB} without collinear edge pairs. Then, alternately deleting boundary vertices as long as case (1) allows and cutting along chords (via Lemma~\ref{lem:chord}) produces a smaller counterexample with zero chords and every boundary vertex lying in at least five triangles. Consequently, if a counterexample without collinear edge pairs exists, one exists with these two properties. With this in mind, the search space of candidate combinatorial types was restricted to such types from the outset. An illustration of the reduction procedure is given in Figure~\ref{fig:reduction}.

\begin{figure}[htb]
\centering
\begin{tikzpicture}[scale=0.6,every node/.style={font=\footnotesize,inner sep=1.5pt}]
  \fill[black!12] (0.00,0.00) -- (3.00,-1.25) -- (6.00,0.00) -- cycle;
  \fill[black!12] (0.00,0.00) -- (3.30,-2.70) -- (3.00,-1.25) -- cycle;
  \fill[black!12] (3.00,-1.25) -- (3.30,-2.70) -- (6.00,0.00) -- cycle;
  \draw[thin] (3.01,0.61) -- (3.47,0.95);
  \draw[thin] (3.01,0.61) -- (2.59,1.01);
  \draw[thin] (3.01,0.61) -- (0.00,0.00);
  \draw[thin] (3.01,0.61) -- (6.00,0.00);
  \draw[thin] (3.47,0.95) -- (2.59,1.01);
  \draw[thin] (3.47,0.95) -- (6.00,0.00);
  \draw[thin] (3.47,0.95) -- (3.22,1.71);
  \draw[thin] (2.59,1.01) -- (0.00,0.00);
  \draw[thin] (2.59,1.01) -- (3.22,1.71);
  \draw[thin] (2.59,1.01) -- (2.28,1.34);
  \draw[thin] (0.00,0.00) -- (6.00,0.00);
  \draw[thin] (0.00,0.00) -- (2.28,1.34);
  \draw[thin] (0.00,0.00) -- (2.42,2.11);
  \draw[very thick] (0.00,0.00) -- (3.00,5.20);
  \draw[thin] (0.00,0.00) -- (3.00,-1.25);
  \draw[very thick] (0.00,0.00) -- (3.30,-2.70);
  \draw[thin] (6.00,0.00) -- (3.22,1.71);
  \draw[thin] (6.00,0.00) -- (3.00,5.20);
  \draw[thin] (6.00,0.00) -- (3.73,2.33);
  \draw[very thick] (6.00,0.00) -- (7.62,4.40);
  \draw[thin] (6.00,0.00) -- (6.36,2.17);
  \draw[thin] (6.00,0.00) -- (5.94,2.91);
  \draw[thin] (6.00,0.00) -- (5.49,3.69);
  \draw[thin] (6.00,0.00) -- (5.06,4.43);
  \draw[thin] (6.00,0.00) -- (3.00,-1.25);
  \draw[very thick] (6.00,0.00) -- (3.30,-2.70);
  \draw[thin] (3.22,1.71) -- (2.28,1.34);
  \draw[thin] (3.22,1.71) -- (2.42,2.11);
  \draw[thin] (3.22,1.71) -- (3.73,2.33);
  \draw[thin] (3.22,1.71) -- (3.25,2.73);
  \draw[thin] (3.22,1.71) -- (2.97,2.68);
  \draw[thin] (2.28,1.34) -- (2.42,2.11);
  \draw[thin] (2.42,2.11) -- (3.00,5.20);
  \draw[thin] (2.42,2.11) -- (2.97,2.68);
  \draw[thin] (3.00,5.20) -- (3.73,2.33);
  \draw[thin] (3.00,5.20) -- (3.25,2.73);
  \draw[thin] (3.00,5.20) -- (2.97,2.68);
  \draw[very thick] (3.00,5.20) -- (7.62,4.40);
  \draw[thin] (3.00,5.20) -- (5.06,4.43);
  \draw[thin] (3.73,2.33) -- (3.25,2.73);
  \draw[thin] (3.25,2.73) -- (2.97,2.68);
  \draw[thin] (7.62,4.40) -- (6.36,2.17);
  \draw[thin] (7.62,4.40) -- (5.94,2.91);
  \draw[thin] (7.62,4.40) -- (5.49,3.69);
  \draw[thin] (7.62,4.40) -- (5.06,4.43);
  \draw[thin] (6.36,2.17) -- (5.94,2.91);
  \draw[thin] (5.94,2.91) -- (5.49,3.69);
  \draw[thin] (5.49,3.69) -- (5.06,4.43);
  \draw[thin] (3.00,-1.25) -- (3.30,-2.70);
  \fill (3.01,0.61) circle (1.5pt);
  \fill (3.47,0.95) circle (1.5pt);
  \fill (2.59,1.01) circle (1.5pt);
  \fill (0.00,0.00) circle (1.5pt);
  \fill (6.00,0.00) circle (1.5pt);
  \fill (3.22,1.71) circle (1.5pt);
  \fill (2.28,1.34) circle (1.5pt);
  \fill (2.42,2.11) circle (1.5pt);
  \fill (3.00,5.20) circle (1.5pt);
  \fill (3.73,2.33) circle (1.5pt);
  \fill (3.25,2.73) circle (1.5pt);
  \fill (2.97,2.68) circle (1.5pt);
  \fill (7.62,4.40) circle (1.5pt);
  \fill (6.36,2.17) circle (1.5pt);
  \fill (5.94,2.91) circle (1.5pt);
  \fill (5.49,3.69) circle (1.5pt);
  \fill (5.06,4.43) circle (1.5pt);
  \fill (3.00,-1.25) circle (1.5pt);
  \fill (3.30,-2.70) circle (1.5pt);
  \node[left] at (0.00,0.00) {$c_0$};
  \node[below right] at (6.00,0.00) {$c_1$};
  \node[above] at (3.00,5.20) {$c_2$};
  \node[below left] at (3.00,-1.25) {$x_1$};
  \node[below] at (3.30,-2.70) {$x_2$};
  \node[right] at (7.62,4.40) {$w$};
  \node at (4.2,-3.6) {(a)};
\end{tikzpicture}
\hspace{1cm}
\begin{tikzpicture}[scale=0.6,every node/.style={font=\footnotesize,inner sep=1.5pt}]
  \draw[thin] (3.01,0.61) -- (3.47,0.95);
  \draw[thin] (3.01,0.61) -- (2.59,1.01);
  \draw[thin] (3.01,0.61) -- (0.00,0.00);
  \draw[thin] (3.01,0.61) -- (6.00,0.00);
  \draw[thin] (3.47,0.95) -- (2.59,1.01);
  \draw[thin] (3.47,0.95) -- (6.00,0.00);
  \draw[thin] (3.47,0.95) -- (3.22,1.71);
  \draw[thin] (2.59,1.01) -- (0.00,0.00);
  \draw[thin] (2.59,1.01) -- (3.22,1.71);
  \draw[thin] (2.59,1.01) -- (2.28,1.34);
  \draw[very thick] (0.00,0.00) -- (6.00,0.00);
  \draw[thin] (0.00,0.00) -- (2.28,1.34);
  \draw[thin] (0.00,0.00) -- (2.42,2.11);
  \draw[very thick] (0.00,0.00) -- (3.00,5.20);
  \draw[thin] (6.00,0.00) -- (3.22,1.71);
  \draw[thin] (6.00,0.00) -- (3.73,2.33);
  \draw[very thick] (6.00,0.00) -- (7.62,4.40);
  \draw[thin] (6.00,0.00) -- (6.36,2.17);
  \draw[thin] (6.00,0.00) -- (5.94,2.91);
  \draw[thin] (6.00,0.00) -- (5.49,3.69);
  \draw[thin] (6.00,0.00) -- (5.06,4.43);
  \draw[thin] (3.22,1.71) -- (2.28,1.34);
  \draw[thin] (3.22,1.71) -- (2.42,2.11);
  \draw[thin] (3.22,1.71) -- (3.73,2.33);
  \draw[thin] (3.22,1.71) -- (3.25,2.73);
  \draw[thin] (3.22,1.71) -- (2.97,2.68);
  \draw[thin] (2.28,1.34) -- (2.42,2.11);
  \draw[thin] (2.42,2.11) -- (3.00,5.20);
  \draw[thin] (2.42,2.11) -- (2.97,2.68);
  \draw[thin] (3.00,5.20) -- (3.73,2.33);
  \draw[thin] (3.00,5.20) -- (3.25,2.73);
  \draw[thin] (3.00,5.20) -- (2.97,2.68);
  \draw[very thick] (3.00,5.20) -- (7.62,4.40);
  \draw[thin] (3.00,5.20) -- (5.06,4.43);
  \draw[thin] (3.73,2.33) -- (3.25,2.73);
  \draw[thin] (3.25,2.73) -- (2.97,2.68);
  \draw[thin] (7.62,4.40) -- (6.36,2.17);
  \draw[thin] (7.62,4.40) -- (5.94,2.91);
  \draw[thin] (7.62,4.40) -- (5.49,3.69);
  \draw[thin] (7.62,4.40) -- (5.06,4.43);
  \draw[thin] (6.36,2.17) -- (5.94,2.91);
  \draw[thin] (5.94,2.91) -- (5.49,3.69);
  \draw[thin] (5.49,3.69) -- (5.06,4.43);
  \draw[very thick,dashed] (6.00,0.00) -- (3.00,5.20);
  \draw[thin,dashed,black!60] (0.00,0.00) -- (3.00,-1.25) -- (6.00,0.00) -- cycle;
  \draw[thin,dashed,black!60] (0.00,0.00) -- (3.30,-2.70) -- (3.00,-1.25) -- cycle;
  \draw[thin,dashed,black!60] (3.00,-1.25) -- (3.30,-2.70) -- (6.00,0.00) -- cycle;
  \fill (3.01,0.61) circle (1.5pt);
  \fill (3.47,0.95) circle (1.5pt);
  \fill (2.59,1.01) circle (1.5pt);
  \fill (0.00,0.00) circle (1.5pt);
  \fill (6.00,0.00) circle (1.5pt);
  \fill (3.22,1.71) circle (1.5pt);
  \fill (2.28,1.34) circle (1.5pt);
  \fill (2.42,2.11) circle (1.5pt);
  \fill (3.00,5.20) circle (1.5pt);
  \fill (3.73,2.33) circle (1.5pt);
  \fill (3.25,2.73) circle (1.5pt);
  \fill (2.97,2.68) circle (1.5pt);
  \fill (7.62,4.40) circle (1.5pt);
  \fill (6.36,2.17) circle (1.5pt);
  \fill (5.94,2.91) circle (1.5pt);
  \fill (5.49,3.69) circle (1.5pt);
  \fill (5.06,4.43) circle (1.5pt);
  \node[left] at (0.00,0.00) {$c_0$};
  \node[below right] at (6.00,0.00) {$c_1$};
  \node[above] at (3.00,5.20) {$c_2$};
  \node at (4.2,-3.6) {(b)};
\end{tikzpicture}
\hspace{1cm}
\begin{tikzpicture}[scale=0.6,every node/.style={font=\footnotesize,inner sep=1.5pt}]
  \draw[thin] (3.01,0.61) -- (3.47,0.95);
  \draw[thin] (3.01,0.61) -- (2.59,1.01);
  \draw[thin] (3.01,0.61) -- (0.00,0.00);
  \draw[thin] (3.01,0.61) -- (6.00,0.00);
  \draw[thin] (3.47,0.95) -- (2.59,1.01);
  \draw[thin] (3.47,0.95) -- (6.00,0.00);
  \draw[thin] (3.47,0.95) -- (3.22,1.71);
  \draw[thin] (2.59,1.01) -- (0.00,0.00);
  \draw[thin] (2.59,1.01) -- (3.22,1.71);
  \draw[thin] (2.59,1.01) -- (2.28,1.34);
  \draw[very thick] (0.00,0.00) -- (6.00,0.00);
  \draw[thin] (0.00,0.00) -- (2.28,1.34);
  \draw[thin] (0.00,0.00) -- (2.42,2.11);
  \draw[very thick] (0.00,0.00) -- (3.00,5.20);
  \draw[thin] (6.00,0.00) -- (3.22,1.71);
  \draw[very thick] (6.00,0.00) -- (3.00,5.20);
  \draw[thin] (6.00,0.00) -- (3.73,2.33);
  \draw[thin] (3.22,1.71) -- (2.28,1.34);
  \draw[thin] (3.22,1.71) -- (2.42,2.11);
  \draw[thin] (3.22,1.71) -- (3.73,2.33);
  \draw[thin] (3.22,1.71) -- (3.25,2.73);
  \draw[thin] (3.22,1.71) -- (2.97,2.68);
  \draw[thin] (2.28,1.34) -- (2.42,2.11);
  \draw[thin] (2.42,2.11) -- (3.00,5.20);
  \draw[thin] (2.42,2.11) -- (2.97,2.68);
  \draw[thin] (3.00,5.20) -- (3.73,2.33);
  \draw[thin] (3.00,5.20) -- (3.25,2.73);
  \draw[thin] (3.00,5.20) -- (2.97,2.68);
  \draw[thin] (3.73,2.33) -- (3.25,2.73);
  \draw[thin] (3.25,2.73) -- (2.97,2.68);
  \fill (3.01,0.61) circle (1.5pt);
  \fill (3.47,0.95) circle (1.5pt);
  \fill (2.59,1.01) circle (1.5pt);
  \fill (0.00,0.00) circle (1.5pt);
  \fill (6.00,0.00) circle (1.5pt);
  \fill (3.22,1.71) circle (1.5pt);
  \fill (2.28,1.34) circle (1.5pt);
  \fill (2.42,2.11) circle (1.5pt);
  \fill (3.00,5.20) circle (1.5pt);
  \fill (3.73,2.33) circle (1.5pt);
  \fill (3.25,2.73) circle (1.5pt);
  \fill (2.97,2.68) circle (1.5pt);
  \node[left] at (0.00,0.00) {$c_0$};
  \node[below right] at (6.00,0.00) {$c_1$};
  \node[above] at (3.00,5.20) {$c_2$};
  \node at (4.2,-1.6) {(c)};
  \node at (3.0,-0.7) {$C$};
  \draw[very thick] (6.95,0.55) -- (3.95,5.75);
  \draw[very thick] (6.95,0.55) -- (8.57,4.95);
  \draw[thin] (6.95,0.55) -- (7.32,2.72);
  \draw[thin] (6.95,0.55) -- (6.89,3.46);
  \draw[thin] (6.95,0.55) -- (6.44,4.24);
  \draw[thin] (6.95,0.55) -- (6.02,4.98);
  \draw[very thick] (3.95,5.75) -- (8.57,4.95);
  \draw[thin] (3.95,5.75) -- (6.02,4.98);
  \draw[thin] (8.57,4.95) -- (7.32,2.72);
  \draw[thin] (8.57,4.95) -- (6.89,3.46);
  \draw[thin] (8.57,4.95) -- (6.44,4.24);
  \draw[thin] (8.57,4.95) -- (6.02,4.98);
  \draw[thin] (7.32,2.72) -- (6.89,3.46);
  \draw[thin] (6.89,3.46) -- (6.44,4.24);
  \draw[thin] (6.44,4.24) -- (6.02,4.98);
  \fill (6.95,0.55) circle (1.5pt);
  \fill (3.95,5.75) circle (1.5pt);
  \fill (8.57,4.95) circle (1.5pt);
  \fill (7.32,2.72) circle (1.5pt);
  \fill (6.89,3.46) circle (1.5pt);
  \fill (6.44,4.24) circle (1.5pt);
  \fill (6.02,4.98) circle (1.5pt);
  \node[right] at (8.57,4.95) {$w$};
  \node at (7.70,0.25) {$B$};
\end{tikzpicture}
\caption{The reduction procedure. (a) A triangulation $\calT$. The boundary vertex $x_2$ lies in two triangles and its
neighbour $x_1$ is interior, so $x_2$ can be deleted by case (1) of Theorem~\ref{thm:red}. Afterwards $x_1$ lies in a
single triangle and can be deleted too (shaded). (b) Now every boundary vertex lies in at least five triangles, so no further deletion is possible, but $c_1c_2$ (dashed) is a chord. (c) Cutting along the chord (Lemma~\ref{lem:chord}) leaves two pieces, $B$ and $C$. $B$ is reducible, so the dimension formula holds on $B$, and so $\delta(\calT)=\delta(C)$. $C$ has no chord and each of its three boundary vertices lies in five triangles. Hence, $\delta(C)$ is all that needs to be computed, and if positive, both $C$ and $\mathcal{T}$ are counterexamples.}
\label{fig:reduction}
\end{figure}

Similar reduction ideas are present in the manuscript of Morgan and Scott \cite[Theorem~3]{MS:96}. The quadratic analogue, with $s\le2$, is \cite[Theorem~5.7]{RB:26}. In particular, the reduction procedure for quadratics discussed in \cite{RB:26} allows one to remove two triangles at a time which have a shared boundary vertex. This is exactly equivalent to the notion of elementary strong collapse present in \cite{BM:12}. It would be interesting if there are further connections between collapse-type ideas present in combinatorial topology and splines.

\section*{AI Usage Statement}
Claude Fable 5.1 was used to perform the search for the triangulation as discussed in Section~\ref{sec:search_procedure}. Claude Fable 5.1 was also used to generate the tables and figures in this paper and help write code to perform the relevant computations. The entire article aside from this was written by the author. All code was verified by the author who takes full responsibility for the content of the article.

\section*{Acknowledgments}
The author thanks Johnny Guzm\'an for many discussions on Strang's conjecture and for reviewing an earlier version of this article.
\bibliographystyle{siam}
\bibliography{ref}

\end{document}